\newtheorem{theorem}{Theorem}
\newtheorem{proposition}[theorem]{Proposition}
\theoremstyle{definition}       % body roman
\newcommand{\nn}{\mathbb{N}}
\newcommand{\IC}{\mathbb{C}}
\newcommand{\ric}{\operatorname{Ric}}
\newcommand{\loc}{\mathrm{loc}}
\newcommand{\inj}{\mathrm{inj}}
\newcommand{\e}{\epsilon}
\newcommand{\vp}{\varphi}
\begin{document}

\begin{titlepage}\setcounter{page}{1}
\title[$L^{p}$-interpolation inequalities and global Sobolev regularity]{$L^{p}$-interpolation inequalities and  global Sobolev regularity results\\ (with an appendix by Ognjen Milatovic)}

\author[B. G\"uneysu]{Batu G\"uneysu}
\address{Batu G\"uneysu, Institut f\"ur Mathematik, Humboldt-Universit\"at zu Berlin, 12489 Berlin, Germany} \email{gueneysu@math.hu-berlin.de}

\author[S. Pigola]{Stefano Pigola}
\address{Stefano Pigola, Dipartimento di Scienza e Alta Tecnologia - Sezione di Matematica, Università dell'Insubria, 22100 Como, Italy} \email{stefano.pigola@uninsubria.it}
\end{titlepage}

\maketitle 

\begin{abstract} On any complete Riemannian manifold $M$ and for all  $p\in [2,\infty)$, we prove a family of second order $L^{p}$-interpolation inequalities that arise from the following simple $L^{p}$-estimate valid for every $u \in C^{\infty}(M)$:
$$
  \|\nabla u\|_{p}^p  \leq \|u \Delta_{p} u\|_1\in [0,\infty],
$$
where $\Delta_p$ denotes the $p$-Laplace operator. We show that these inequalities, in combination with abstract functional analytic arguments, allow to establish new global Sobolev regularity results for $L^p$-solutions of the Poisson equation for all $p\in (1,\infty)$, and new global Sobolev regularity results for the singular magnetic Schr\"odinger semigroups. 
\end{abstract}

\section{Some definitions from analysis on Riemannian manifolds}

In the sequel, all manifolds are understood to be without boundary and spaces of functions are understood over $\mathbb{R}$. Let $M=(M,g)$ be a smooth connected Riemann $m$-manifold. We denote with $d(x,y)$ the geodesic distance of $x,y\in M$, and for all $r>0$ with $B(x,r)$ the induced open ball with radius $r$ around $x$. We understand all our function spaces like $C^{\infty}(M)$ to be real-valued, while complexifications will be denoted with an index '$\IC$', like $C^{\infty}_{\IC}(M)$ etc.. For $p\in [1,\infty]$ the Banach space $L^p(M)$ is defined with respect to the Riemannian volume measure $\mu$, with $\left\|\cdot\right\|_p$ its norm.\\
Given a smooth $\mathbb{R}$-metric vector bundle $E\to M$, whenever there is no danger of confusion the underlying fiberwise scalar product will be simply denoted with $(\cdot,\cdot)$, with $|\cdot|:=(\cdot,\cdot)^{1/2}$ the induced fiberwise norm. Then one sets
$$
\left\|\Psi\right\|_p:= \left\||\Psi|\right\|_p\quad\text{ for every Borel section $\Psi$ in $E\longrightarrow M $,}
$$
leading to the Banach spaces $\Gamma_{L^p}(M,E)$ and the locally convex spaces $\Gamma_{L^p_{\loc}}(M,E)$ in the usual way. Given another smooth metric $\mathbb{R}$-vector bundle $F\to M$ and a smooth linear partial differential operator $P$ from $E\to M$ to $F\to M$ of order $\leq k$, its adjoint is the uniquely determined smooth linear partial differential operator $P^{\dagger}$ of order $\leq k$ from $F\to M$ to $E\to M$ which satisfies
$$
\int (P^{\dagger}\psi,\phi) d\mu= \int (\psi,P\phi) d\mu 
$$
for all $\psi\in \Gamma_{C^{\infty}}(M,F)$, $\phi\in \Gamma_{C^{\infty}}(M,E)$, with either $\psi$ or $\phi$ compactly supported. Given $f\in \Gamma_{L^1_{\loc}}(M,E)$, this allows to define the validity of $Pf\in  \Gamma_{L^p_{\loc}}(M,E)$ or $Pf\in  \Gamma_{L^p}(M,E)$ in the usual way.\\
As a particular case of the above constructions, we remark that bundles of the form
$$
T^{r,s}M:=(T^*M)^{\otimes^r}\otimes  (T M)^{\otimes^s}\longrightarrow M 
$$
canonically become smooth metric $\mathbb{R}$-vector bundles, in view of the Riemannian structure on $M$. With 
$$
d: C^{\infty}(M)\longrightarrow \Gamma_{C^{\infty}}(M,T^*M)
$$
we denote the total derivative, the gradient can be defined by
$$
\nabla: C^{\infty}(M)\to \Gamma_{C^{\infty}}(M,T^*M),\quad (\nabla u, X):= du (X),
$$
where $X$ is an arbitrary vector field on $M$. The formal adjoint 
$$
\nabla^{\dagger}:\Gamma_{C^{\infty}}(M,T^*M)\longrightarrow C^{\infty}(M)
$$
of $\nabla$ is $(-1)$ times the divergence operator (cf. Theorem 3.14 in \cite{gri}), and with the usual abuse of notation, the Hessian can be defined by
$$
\nabla^2:C^{\infty}(M)\longrightarrow \Gamma_{C^{\infty}}(M,T^{0,2}M),\quad \nabla^2 u(X,Y):= (\nabla^{TM}_X \nabla u, Y),
$$
where $X,Y$ are arbitrary vector fields on $M$, and $\nabla^{TM}$ the Levi-Civita connection on $M$.\\

We further recall that for $p\in [2,\infty)$, the $p$-Laplacian is the nonlinear differential operator defined by
$$
\Delta_p:C^{\infty}(M)\longrightarrow C^{0}(M),\quad \Delta_p u:=\nabla^{\dagger} \ (|\nabla u|^{p-2}\nabla u).
$$
In particular, one finds that $\Delta_2=\Delta:=\nabla^{\dagger}\nabla$ is the usual scalar Laplace-Beltrami operator. \\
Following \cite{babba,GP}, we will call $(\psi_k)_{k\in\mathbb{N}}\subset C^{\infty}_c(M)$ 
\begin{itemize}
\item \emph{a sequence of first order cut-off functions}, if $0\leq \psi_k\leq 1$ pointwise for all $k$, $\psi_k  \nearrow 1$ pointwise, and $\|\nabla \psi_k\|_{\infty}\to 0$ as $k\to\infty$,
\item \emph{a sequence of Hessian cut-off functions}, if it is a sequence of first order cut-off functions such that in addition $\left\|\nabla^2\psi_k\right\|_{\infty}\to 0$ as $k\to\infty$,
\item \emph{a sequence of Laplacian cut-off functions}, if it is a sequence of first order cut-off functions such that in addition $\left\|\Delta\psi_k\right\|_{\infty}\to 0$ as $k\to\infty$.      
\end{itemize}

Note that in view of $|\Delta\psi_k|\leq \sqrt{m} |\nabla^2\psi_k|$, every sequence of Hessian cut-off functions is also a sequence of Laplacian cut-off functions. Moreover, $M$ admits a sequence of first order cut-off functions, if and only if $M$ is geodesically complete, \cite{PS}. The state of the art concerning the existence of Laplacian cut-off functions is contained in \cite{alberto}: there the authors have shown that Laplacian cut-off functions exist on $M$, if $M$ is geodesically complete and there exists a point $o\in M$, and constants $\kappa \in [0,\infty)$, $\widetilde{\kappa}\in [-2,\infty)$, such that
\begin{align}\label{aspopo}
\mathrm{Ric}\geq - \kappa (1+d(\cdot,o)^2)^{-\widetilde{\kappa}/2}.
\end{align}
Furthermore, if $M$ is geodesically complete, then $M$ admits a sequence of Hessian cut-off functions, for example if $M$ has absolutely bounded sectional curvatures \cite{GP}, or if $M$ has a bounded Ricci curvature and a positive injectivity radius \cite{RV}.\vspace{2mm}

Next, we recall that $M$ is said to \emph{satisfy the $L^p$-Calder\'on-Zygmund inequality $CZ(p)$} (where $p\in (1,\infty)$), if there exist constants $C_1\in (0,\infty)$, $C_2\in [0,\infty)$ such that
$$
\left\|\nabla^2 u\right\|_p\leq C_1 \left\|\Delta u\right\|_p+C_2\left\|u\right\|_p\quad\text{ for all $u\in C^{\infty}_c(M)$.}
$$

A simple consequence of Bochner\rq{}s inequality (cf. Appendix \ref{appendix-c}, equation \eqref{bochner}) is that $CZ(2)$ is satisfied if $M$ has Ricci curvature bounded from below by a constant. Moreover, there exist geodesically complete smooth Riemann manifolds which do not satisfy $CZ(2)$ \cite{GP}. The validity of $CZ(p)$ with $p\ne 2$ is a highly delicate business, which has also been addressed in \cite{GP}. For example, $M$ satisfies $CZ(p)$ for every $p\in (1,\infty)$, if $M$ has a positive injectivity radius and a bounded Ricci curvature. For $p\in (1,2]$, using covariant Riesz-transform techniques it is shown in \cite{GP} that $M$ satisfies $CZ(p)$ under geodesic completeness, a $C^1$-boundedness of the curvature, and a rather subtle volume doubling condition (but no assumption on the injectivity radius!).

\section{Main results}

A classical regularity result by Strichartz \cite[Corollary 3.5]{St} states that if $M$ is geodesically complete and if $u,f\in L^2(M)$ and if $u$ is  a solution of the Poisson equation $\Delta u =f$, then one has $\nabla u \in \Gamma_{L^{2}}(M,TM)$. The question we will be concerned in this paper is: \vspace{2mm}

\emph{Are there natural extensions of Strichartz' result at an $L^p$-scale?} \vspace{2mm}

To begin with, we remark that Strichartz' proof for $p=2$ uses Hilbert space arguments, in that it relies on the essential self-adjointness of $\Delta$. In particular, it is clear that the examination of the latter question will require new ideas for $p\ne 2$. In our study of this problem for $p>2$, we found the following very natural result, our first main result:

\begin{theorem}\label{main} Let $M$ be geodesically complete, let $p\in [2,\infty)$ and let $u\in L^{p}(M)\cap C^{\infty}(M)$. Then one has 
\begin{align}\label{E33}
 \|\nabla u\|_{p}^p  \leq \|u \Delta_{p} u\|_1\in [0,\infty],
\end{align}
and, for all 
$$
a_1,a_2,a_3, b_1,b_2,b_3\in [1,\infty]\quad\text{ with }\quad
		1/a_1+1/a_2+1/a_3=1=	1/b_1+1/b_2+1/b_3,
$$
one has
\begin{equation}\label{F31}
 \left\|\nabla u\right\|_p^p  \leq    \left\|u\right\|_{a_1} \left\|\nabla u\right\|_{(p-2)a_3}^{p-2} \left\|\Delta u\right\|_{a_2}+(p-2) \left\|u\right\|_{b_1} \left\|\nabla u\right\|_{(p-2)b_3}^{p-2} \left\|\nabla^2 u\right\|_{b_2}\in [0,\infty].
\end{equation}
\end{theorem}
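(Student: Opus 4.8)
The plan is to deduce the interpolation inequality \eqref{F31} from the master estimate \eqref{E33} by a pointwise computation of $\Delta_p u$ followed by H\"older, and to prove \eqref{E33} itself by an integration-by-parts argument against first order cut-off functions, whose existence is guaranteed by geodesic completeness. Throughout I may assume $\|u\Delta_p u\|_1<\infty$, since otherwise \eqref{E33} is trivial.

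First I would reduce \eqref{F31} to \eqref{E33}. Expanding the divergence of $|\nabla u|^{p-2}\nabla u$ via the Leibniz rule and using $\nabla^\dagger=-\div$ together with $\Delta=\nabla^\dagger\nabla$, one obtains the pointwise identity
\[
\Delta_p u=|\nabla u|^{p-2}\Delta u-(p-2)|\nabla u|^{p-4}\nabla^2u(\nabla u,\nabla u)
\]
on $\{\nabla u\ne 0\}$, the right side being read as $0$ where $\nabla u=0$ (for $p>2$). Since $|\nabla^2u(\nabla u,\nabla u)|\le|\nabla^2u|\,|\nabla u|^2$, this gives the pointwise bound $|u\,\Delta_p u|\le|u|\,|\nabla u|^{p-2}|\Delta u|+(p-2)|u|\,|\nabla u|^{p-2}|\nabla^2u|$. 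Integrating and applying the generalized H\"older inequality to the two triples of exponents, while recognizing that $\||\nabla u|^{p-2}\|_{a_3}=\|\nabla u\|_{(p-2)a_3}^{p-2}$ and likewise for $b_3$, turns $\|u\Delta_p u\|_1$ into exactly the right-hand side of \eqref{F31}; then \eqref{E33} closes the argument.

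For \eqref{E33} the core is integration by parts, legitimate because $V:=|\nabla u|^{p-2}\nabla u$ is a $C^1$ vector field for $p\ge2$: for any $\phi\in C^\infty_c(M)$,
\[
\int \phi\,\Delta_p u\,d\mu=\int (\nabla\phi,\,|\nabla u|^{p-2}\nabla u)\,d\mu.
\]
Taking $\phi=\psi_k u$ with $(\psi_k)$ a sequence of first order cut-off functions yields $\int\psi_k|\nabla u|^p=\int\psi_k u\,\Delta_p u-\int u|\nabla u|^{p-2}(\nabla u,\nabla\psi_k)$. The left side increases to $\|\nabla u\|_p^p$ by monotone convergence, and the first right-hand term converges to $\int u\,\Delta_p u\le\|u\Delta_p u\|_1$ by dominated convergence. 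The error term is bounded by $\|\nabla\psi_k\|_\infty\|u\|_p\|\nabla u\|_p^{p-1}$, which tends to $0$ as $\|\nabla\psi_k\|_\infty\to0$ \emph{provided} one already knows $\|\nabla u\|_p<\infty$. This conditional vanishing is the main obstacle, since a priori finiteness of the right-hand side of \eqref{E33} does not obviously force $\nabla u\in\Gamma_{L^p}(M,T^*M)$.

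To remove the obstacle I would first establish $\|\nabla u\|_p<\infty$ by repeating the integration by parts against the weighted test function $\phi=\psi_k^p u$. The cross term then carries the weight $\psi_k^{p-1}\nabla\psi_k$, and a Young inequality splitting $\psi_k^{p-1}|u||\nabla\psi_k|\cdot|\nabla u|^{p-1}$, with the factor $\psi_k^{p-1}|\nabla u|^{p-1}$ placed on the conjugate exponent $p'=p/(p-1)$, produces the absorbable quantity $\tfrac12\int\psi_k^p|\nabla u|^p$ plus a remainder $C\|\nabla\psi_k\|_\infty^p\|u\|_p^p$. Since $\int\psi_k^p|\nabla u|^p<\infty$ for each $k$ (the $\psi_k$ have compact support and $u$ is smooth), absorption gives $\int\psi_k^p|\nabla u|^p\le 2\|u\Delta_p u\|_1+C\|\nabla\psi_k\|_\infty^p\|u\|_p^p$ uniformly in $k$, whence $\|\nabla u\|_p^p<\infty$ by monotone convergence. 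With $\nabla u\in\Gamma_{L^p}(M,T^*M)$ now in hand, the first (unweighted) computation runs to completion and delivers the sharp constant $\|\nabla u\|_p^p=\int u\,\Delta_p u\le\|u\Delta_p u\|_1$, which is \eqref{E33}.
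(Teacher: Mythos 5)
Your proposal is correct and follows essentially the same route as the paper: the same pointwise expansion of $\Delta_p u$ combined with H\"older's inequality for \eqref{F31}, and for \eqref{E33} the same integration by parts against the compactly supported $C^1$ vector field $\psi_k^p\, u\,|\nabla u|^{p-2}\nabla u$ with a Young-inequality absorption and first order cut-off functions. The only cosmetic difference is the endgame: the paper keeps the Young parameter $\epsilon$ and sends $\epsilon\to 0$ after $k\to\infty$ to recover the sharp constant in a single pass, whereas you first extract $\|\nabla u\|_p<\infty$ with a crude constant and then rerun the unweighted identity; both are valid.
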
 

The proof of (\ref{E33}) is based on an integration by parts machinery that relies on the existence of a sequence of first order cut-off functions. In particular, our proof is completely different from Strichartz' proof for $p=2$. Then, as we will show, (\ref{F31}) follows straightforwardly from (\ref{E33}) in view of an explicit calculation for the $p$-Laplacian and H\"older's inquality. Inequality (\ref{E33}) itself can be considered as a generalization to $p>2$ of Strichartz' result: indeed, (\ref{E33}) and H\"older's inequality imply that for all smooth $u$ we have $\nabla u \in \Gamma_{L^{p}}(M,TM)$, whenever $u,f \in L^{p}(M)$ and $u$ solves $\Delta_{p} u=f$. Here, $q\in (1,\infty)$ is defined by $1/p+1/q=1$. However a genuine $L^{p}$-extension of Strichartz result is contained in part a) of the following result, which was the main motivation of this paper: 

\begin{theorem}\label{th2} Let $p\in (1,\infty)$, let $f\in L^{p}(M)$, and let $u\in  L^{p}(M)$ be a (distributional) solution of the Poisson equation $\Delta u = f$.\\
\emph{a)} There exists a constant $C \in (0,\infty)$, which only depends on $p$, with the following property: if $M$ is geodesically complete and if 
\begin{equation}\label{HessianAssumption}
\max\{p-2,0\} \nabla^2 u\in \Gamma_{L^p}(M, T^{0,2}M),
\end{equation}
then one has 
 \begin{align}\label{W1p0}
\left\|\nabla u\right\|_p^{2}  \leq   C  \left\|u\right\|_{p}   \left\|f\right\|_{p}+\max\{p-2,0\} \left\|u\right\|_{p}  \left\|\nabla^2 u\right\|_{p}<\infty.
\end{align}
\emph{b)} Assume that $M$ satisfies $CZ(p)$ and admits a sequence of Hessian cut-off functions. Then the following statemens are equivalent:
\begin{itemize}
	\item $\nabla u\in \Gamma_{L^p}(M, TM)$,
	\item $ \nabla^2 u\in \Gamma_{L^p}(M, T^{0,2}M)$.
\end{itemize}
Moreover, there exists a constant $C\in (0,\infty)$, which only depends on $p$ and on the constants from $CZ(p)$, such that if $\nabla u\in \Gamma_{L^p}(M, TM)$ (or equivalently $ \nabla^2 u\in \Gamma_{L^p}(M, T^{0,2}M)$), then one has
\begin{align}\label{insa}
\left\|\nabla u\right\|_p + \left\|\nabla^2 u\right\|_p\leq C \left\|u\right\|_p+ C \left\|f\right\|_p.
\end{align}
\end{theorem}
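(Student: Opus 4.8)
The plan is to prove Theorem~\ref{th2} by first establishing part~a) as the analytic core, using Theorem~\ref{main} together with an approximation argument that upgrades the smooth inequality \eqref{E33} to the distributional solution $u$, and then to deduce part~b) by combining a) with the Calder\'on--Zygmund inequality $CZ(p)$ and the Hessian cut-off functions to close an \emph{a priori} estimate.

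\begin{proof}[Proof plan for part a)] First I would reduce to the smooth case. Since $u\in L^p(M)$ solves $\Delta u=f$ distributionally with $f\in L^p(M)$, elliptic regularity gives $u\in C^\infty(M)$ (indeed $u\in W^{2,p}_{\loc}$ and then smooth if we bootstrap, but at least $u$ is smooth as a harmonic-type solution with smooth coefficients once $f$ is treated as a datum; more carefully, $u\in W^{2,p}_{\loc}$ suffices to run the integrations by parts after mollification). Assume first $p\geq 2$. I want to apply inequality \eqref{E33} of Theorem~\ref{main}, but its hypothesis is $u\in L^p\cap C^\infty$, which holds, so
$$
\|\nabla u\|_p^p\leq \|u\,\Delta_p u\|_1.
$$
The key algebraic step is the pointwise expansion of $\Delta_p u=\nabla^\dagger(|\nabla u|^{p-2}\nabla u)$. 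Carrying out the Leibniz rule for the divergence, one finds the identity
$$
\Delta_p u=|\nabla u|^{p-2}\Delta u-(p-2)|\nabla u|^{p-4}\nabla^2 u(\nabla u,\nabla u),
$$
valid where $\nabla u\neq 0$, so that pointwise $|\Delta_p u|\leq |\nabla u|^{p-2}|\Delta u|+(p-2)|\nabla u|^{p-2}|\nabla^2 u|$. Substituting into $\|u\,\Delta_p u\|_1$ and applying H\"older with the three exponents $1/p+ (p-2)/p + 1/p=1$ (that is, taking $a_1=b_1=p$, $a_2=b_2=p$, $a_3=b_3=p/(p-2)$ in \eqref{F31}) yields
$$
\|\nabla u\|_p^p\leq \|u\|_p\,\|\nabla u\|_p^{p-2}\,\|\Delta u\|_p+(p-2)\|u\|_p\,\|\nabla u\|_p^{p-2}\,\|\nabla^2 u\|_p.
$$
Now the hypothesis \eqref{HessianAssumption} guarantees the second term is finite, and the first is finite since $\|\Delta u\|_p=\|f\|_p<\infty$. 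Dividing by $\|\nabla u\|_p^{p-2}$ (a case distinction handling $\|\nabla u\|_p=0$, which is trivial, and $\|\nabla u\|_p=\infty$, which \eqref{E33} and finiteness of the right-hand side exclude) gives exactly
$$
\|\nabla u\|_p^{2}\leq \|u\|_p\,\|\Delta u\|_p+(p-2)\|u\|_p\,\|\nabla^2 u\|_p,
$$
which is \eqref{W1p0} with $C=1$ for $p\geq 2$. The case $1<p<2$ uses $\max\{p-2,0\}=0$ so the Hessian term drops; here I would instead argue through a duality/Riesz-transform or a direct $L^p$ Bochner argument, producing the constant $C=C(p)$; this endpoint regime is the first place where the simple integration-by-parts machinery of Theorem~\ref{main} (stated only for $p\geq 2$) does not directly apply, and is a genuine obstacle requiring a separate argument.

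\end{proof}

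\begin{proof}[Proof plan for part b)] Assume $CZ(p)$ and a sequence $(\psi_k)$ of Hessian cut-off functions. The equivalence of $\nabla u\in\Gamma_{L^p}$ and $\nabla^2 u\in\Gamma_{L^p}$ is the heart of the matter. One implication is immediate from $CZ(p)$ applied not to $u$ directly (which is not compactly supported) but to the truncations $\psi_k u$: I would verify that $\psi_k u\in C^\infty_c(M)$, compute $\Delta(\psi_k u)=\psi_k\Delta u+2(\nabla\psi_k,\nabla u)+u\,\Delta\psi_k$ via the product rule, and pass to the limit $k\to\infty$ using $\|\nabla\psi_k\|_\infty\to 0$ and $\|\Delta\psi_k\|_\infty\to 0$. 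The key uniform control is that $\nabla^2(\psi_k u)=\psi_k\nabla^2 u+2\,\nabla\psi_k\otimes\nabla u+u\,\nabla^2\psi_k$ (symmetrized), and the lower-order correction terms are $L^p$-negligible precisely because of the Hessian cut-off decay combined with the a priori finiteness of $\|\nabla u\|_p$ and $\|u\|_p$. Applying $CZ(p)$ to $\psi_k u$ and taking $k\to\infty$ then yields $\|\nabla^2 u\|_p\leq C_1\|\Delta u\|_p+C_2\|u\|_p=C_1\|f\|_p+C_2\|u\|_p<\infty$, giving the implication $\nabla u\in L^p\Rightarrow\nabla^2 u\in L^p$ and simultaneously half of \eqref{insa}. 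For the converse implication $\nabla^2 u\in L^p\Rightarrow\nabla u\in L^p$, I would use part~a): the right-hand side of \eqref{W1p0} is finite once $\nabla^2 u\in L^p$ and $u,f\in L^p$, which forces $\|\nabla u\|_p<\infty$; here one must take care that \eqref{W1p0} was derived assuming the finiteness in \eqref{HessianAssumption}, so the logic is that $\nabla^2 u\in L^p$ is exactly that hypothesis. Finally, to obtain \eqref{insa} one combines the two bounds: $\|\nabla u\|_p$ is controlled by \eqref{W1p0} (after using Young's inequality $ab\leq \tfrac12 a^2+\tfrac12 b^2$ to split $\|u\|_p\|\nabla^2 u\|_p$ and absorb), and $\|\nabla^2 u\|_p$ by $CZ(p)$, yielding the stated estimate with a constant depending on $p$ and $C_1,C_2$.

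The main obstacle I anticipate is making the limit passage in the $CZ(p)$ step rigorous: one must show the correction terms $(\nabla\psi_k,\nabla u)$, $u\,\Delta\psi_k$, $\nabla\psi_k\otimes\nabla u$, and $u\,\nabla^2\psi_k$ all tend to zero in $L^p$, and this requires the Hessian cut-off property $\|\nabla^2\psi_k\|_\infty\to 0$ (hence $\|\Delta\psi_k\|_\infty\to 0$) together with $\psi_k\nearrow 1$ and dominated convergence applied to $\psi_k\nabla^2 u\to\nabla^2 u$ and $\psi_k\nabla u\to\nabla u$ in $L^p$. The subtlety is that $CZ(p)$ only furnishes an estimate for compactly supported smooth functions, so one cannot apply it to $u$ itself; the whole cut-off apparatus exists solely to transfer the inequality to the non-compactly-supported solution $u$, and verifying that no boundary/tail contributions survive is the delicate bookkeeping that this argument hinges on.
\end{proof}
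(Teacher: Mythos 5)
Your overall architecture --- apply Theorem \ref{main} with $a_1=a_2=b_1=b_2=p$, $a_3=b_3=p/(p-2)$ for part a) when $p\ge 2$, then close part b) by combining a) with $CZ(p)$ and an absorption via Young's inequality --- is the same as the paper's, but there are three genuine gaps. First, your reduction to smooth $u$ fails as stated: for $\Delta u=f$ with $f\in L^p(M)$, elliptic regularity gives only $u\in W^{2,p}_{\loc}$, not $u\in C^{\infty}(M)$ (there is no bootstrap, since $f$ is merely $L^p$), and the parenthetical ``mollification'' fallback is never carried out. The paper's fix is a global Meyers--Serrin approximation (Theorem \ref{ms}): choose $u_k\in C^{\infty}(M)$ with $u_k\to u$, $\Delta u_k\to f$ and $\nabla^2 u_k\to\nabla^2 u$ in $L^p$, apply the smooth inequality to the differences $u_k-u_h$ to see that $(\nabla u_k)$ is Cauchy in $\Gamma_{L^p}(M,TM)$, and pass to the limit. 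This also sidesteps your division by $\left\|\nabla u\right\|_p^{p-2}$ for $u$ itself: if $\left\|\nabla u\right\|_p=\infty$, the right-hand side of your displayed inequality contains the factor $\left\|\nabla u\right\|_p^{p-2}=\infty$ as well, so finiteness of the remaining factors does not ``exclude'' this case --- the finiteness of $\left\|\nabla u\right\|_p$ comes out of the Cauchy-sequence argument, not out of \eqref{F31} alone. Second, the range $1<p<2$ is left entirely open; you flag it honestly, but it is half of part a). The paper settles it with the Coulhon--Duong inequality $\left\|\nabla u\right\|_p^2\le C_p\left\|u\right\|_p\left\|\Delta u\right\|_p$ for $u\in C^{\infty}_c(M)$, upgraded to general $u$ via the density result $\Delta_{\min,p}=\Delta_{\max,p}$ under geodesic completeness (Theorem \ref{A} in Appendix \ref{appendix-a}) --- a nontrivial functional-analytic ingredient and one of the paper's contributions, not something a generic duality or Bochner argument supplies.

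Third, in part b) you apply $CZ(p)$ to $\psi_k u$, but $CZ(p)$ is an inequality for $C^{\infty}_c(M)$ functions and $u$ is only $W^{2,p}_{\loc}$, so $\psi_k u$ is not an admissible test function without a further smoothing; also note that killing the error term $\nabla\psi_k\otimes\nabla u$ requires $\nabla u\in\Gamma_{L^p}(M,TM)$, which is the hypothesis in the direction you intend but should be stated. The paper instead smooths $u$ by Meyers--Serrin (only $u_k\to u$ and $\Delta u_k\to f$ in $L^p$ are needed here) and invokes the extension of $CZ(p)$ to non-compactly-supported smooth functions with $u,\nabla u,\Delta u\in L^p$ (Proposition 3.8 of \cite{GP}, whose proof is essentially your Hessian cut-off computation), obtaining $\left\|\nabla^2(u_k-u_h)\right\|_p\le C\left\|\Delta(u_k-u_h)\right\|_p+C\left\|u_k-u_h\right\|_p$ and again a Cauchy-sequence argument. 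Your converse implication via part a) and the absorption yielding \eqref{insa} match the paper. In summary: the skeleton is right, but the smoothing steps in a) and b) and the entire case $1<p<2$ are missing, and each requires a specific tool (Theorem \ref{ms}, Theorem \ref{A}, Coulhon--Duong) rather than routine bookkeeping.
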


Concerning part a) of Theorem \ref{th2}: for $p>2$ this result is a simple consequence of (\ref{F31}) and some standard Meyers-Serrin type smoothing argument, while for $p<2$ it relies on an inequality of Coulhon/Duong \cite{CD} for smooth compactly supported functions and a nonstandard smoothing procedure, which is based on a new functional fact proved in Appendix \ref{appendix-a} of this paper: namely, the minimal and maximal $L^p$-realization of $\Delta$ coincide under geodesic completeness (for all $p\in (1,\infty)$), a result that so far was only known under a $C^{\infty}$-boundedness assumption on the geometry of $M$ \cite{Shub, Mi} (which by definition means that the curvature tensor of $M$ and all its derivates are bounded and in addition that $M$ has a positive injectivity radius). Note that, for $1<p\leq 2$, condition \eqref{HessianAssumption} is trivially satisfied hence no $L^{p}$-assumption on the Hessian is required to conclude $\nabla u \in \Gamma_{L^{p}}(M,TM)$. In particular, the case $p=2$ is precisely Strichartz' result.\vspace{2mm}

Concerning part b) of Theorem \ref{th2}: note first that this statement can be considered as partially inverse to part a). In fact, it was proved in \cite{GP}, under the stated assumptions on $M$, that for every $f\in L^{p}(M)$ and every solution $u\in  L^{p}(M)$ of the Poisson equation $\Delta u = f$ with $\nabla u \in \Gamma_{L^{p}}(M,TM)$ one has $\nabla^2 u \in \Gamma_{L^{p}}(M,T^{0,2}M)$, leaving the question open whether the assumption $\nabla u \in \Gamma_{L^{p}}(M,TM)$ was just a technical relict of the proof. Theorem \ref{th2} b) shows that the assumption $\nabla u \in \Gamma_{L^{p}}(M,TM)$ is actually necessary in this context. We   also emphasize that, thanks to the abstract formulation of b), the result is so flexible to provide $L^{p}$ Hessian estimates for the Poisson equation under different geometric conditions on the underlying manifold. We  already  recalled how the validity of $CZ(p)$ and the existence of Hessian cut-off functions can be related to the geometry of the manifold. Concerning the $L^{p}$-integrability of the gradient we mention the interesting paper by E. Amar, \cite{Am}, where  the case of complete manifolds with $\| \ric \|_{\infty}<+\infty$ and $r_{\mathrm{\inj}}(M) >0$ is considered, and the recent preprint by L.-J. Cheng, A. Thalmaier and J. Thompson, \cite{thali}, where the geometric assumptions are strongly relaxed to  $\ric \geq - K^{2}$ for some $K \geq 0$.
Furthermore, we point out that global $W^{2,p}$-estimates of the type \eqref{insa} for solutions of the Poisson equation have been used in \cite{RV} to produce gradient Ricci soliton structures via log-Sobolev inequalities. 
\vspace{2mm}

Finally, we present an application of Theorem \ref{main} concerning the global regularity of the semigroups associated with magnetic Schr\"odinger operators whose potentials are allowed to have local singularities. To this end, we recall that if $M$ is geodesically complete, given an electric potential $0\leq V\in L^2_{\mathrm{loc}}(M)$ and a magnetic potential $A\in \Gamma_{L^4_{\mathrm{loc}}}(M,T M)$ with $\nabla^{\dagger}A\in L^2_{\mathrm{loc}}(M)$, then the magnetic Schr\"odinger operator $\Delta_{A,V}$ in $L^2_{\IC}(M)$, defined initially on $\Psi\in C^{\infty}_{c,\IC}(M)$ by 
\begin{align}\label{mago}
\Delta_{A,V}\Psi:&=(\nabla - \sqrt{-1}A)^{\dagger}(\nabla - \sqrt{-1}A)\Psi\\
&=\Delta \Psi- 2 \sqrt{-1}(A,\nabla \Psi)+\sqrt{-1}(\nabla^{\dagger}A)\Psi+|A|^2\Psi+V\Psi,
\end{align}
is a well-defined nonnegative symmetric operator, which is essentially self-adjoint \cite{GK, LS}. Its self-adjoint closure $H_{A,V}$ is semibounded from below and we can consider its associated magnetic Schr\"odinger semigroup 
$$
 [0,\infty)\ni t\longmapsto \mathrm{e}^{-t H_{A,V}}  \in \mathscr{L}(L^2_{\IC}(M))
$$
defined by the spectral calculus. In fact, a certain self-adjoint extension of $\Delta_{A,V}$ can be defined using quadratic form methods (even without assuming that $M$ is complete), and it is much more convenient to prove \cite{GK,LS} that $C^{\infty}_{c,\IC}(M)$ is an operator core for this extension, rather than proving directly that $\Delta_{A,V}$ is essentially self-adjoint. To do so, the crucial step in the proof is to show the local regularity
\begin{align}\label{addaa2}
\Delta\mathrm{e}^{-t  H_{A,V}}f\in L^2_{\loc,\IC}(M),\quad \nabla \mathrm{e}^{-t H_{A,V}}f\in \Gamma_{L^4_{\loc,\IC}}(M, TM),
\end{align}
for all $f\in L^2_{\IC}(M)$, $t\in(0,\infty)$. This result is needed in the above context to make the machinery of Friedrichs mollifiers work. While the latter local regularity does not need any control on the geometry of $M$, we realized that the inequality (\ref{F31}) from Theorem \ref{main} can be used to answer the following regularity question: \emph{Assume
\begin{align}\label{cv}
0\leq V\in L^2(M),\quad A\in \Gamma_{L^2}(M,TM),\quad  \nabla^{\dagger} A\in L^4(M).
\end{align}
Under which geometric assumptions on $M$ do we have the global regularity}
\begin{align}\label{addaa}
\Delta\mathrm{e}^{-t  H_{A,V}}f\in L^2_{\IC}(M),\quad \nabla \mathrm{e}^{-t H_{A,V}}f\in \Gamma_{L^4_{\IC}}(M, TM)
\end{align}
for all $f\in L^2_{\IC}(M)$, $t\in(0,\infty)$? Towards this aim, we recall that $M$ is called \emph{ultracontractive}\footnote{If $M$ is not geodesically complete, $H_{0,0}$ has to be replaced with the Friedrichs realization of $\Delta$ in the definition of ultracontractivity.}, if the jointly smooth integral of $\mathrm{e}^{-t H_{0,0}}$ satisfies
$$
\sup_{x\in M}\mathrm{e}^{-t H_{0,0}}(x,x)<\infty\quad\text{ for all $t\in (0,\infty)$.}
$$

We are going to use (\ref{F31}) to prove the following result, which seems even new for the Euclidean $\mathbb{R}^m$:

\begin{theorem}\label{uiuiui} Assume $M$ admits a sequence of Laplacian cut-off functions and satisfies $CZ(2)$. Then for all $V$ and $A$ with (\ref{cv}), and all $f\in L^2_{\IC}(M)\cap L^{\infty}(M)$, $t \in(0,\infty)$ one has (\ref{addaa}). If in addition $M$ is ultracontractive, then one has (\ref{addaa}) for all $f\in L^2_{\IC}(M)$, $t\in(0,\infty)$.
\end{theorem}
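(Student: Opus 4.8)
The plan is to analyse $u:=\mathrm{e}^{-tH_{A,V}}f$ directly and to run inequality \eqref{F31} at $p=4$ against the pointwise expansion \eqref{mago}. Because $u$ lies in the range of the semigroup it belongs to $\bigcap_{k}\mathrm{Dom}(H_{A,V}^{k})$, so $\Delta_{A,V}u=H_{A,V}u\in L^{2}_{\IC}(M)$, while membership in the form domain gives $(\nabla-\sqrt{-1}A)u\in\Gamma_{L^{2}_{\IC}}(M,TM)$ and $V^{1/2}u\in L^{2}_{\IC}(M)$. For $f\in L^{2}_{\IC}(M)\cap L^{\infty}(M)$ the diamagnetic inequality $|\mathrm{e}^{-tH_{A,V}}f|\le\mathrm{e}^{-tH_{0,0}}|f|$ and the sub-Markovian property of $\mathrm{e}^{-tH_{0,0}}$ yield $\|u\|_{\infty}\le\|f\|_{\infty}$, hence $u\in\bigcap_{2\le q\le\infty}L^{q}_{\IC}(M)$. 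Since $A\in\Gamma_{L^{2}}(M,TM)$ and $u\in L^{\infty}(M)$ we have $Au\in\Gamma_{L^{2}}(M,TM)$, so $\nabla u=(\nabla-\sqrt{-1}A)u+\sqrt{-1}Au\in\Gamma_{L^{2}}(M,TM)$, and from $Vu\in L^{2}_{\IC}(M)$ we get $\Delta_{A,0}u:=\Delta_{A,V}u-Vu\in L^{2}_{\IC}(M)$. Finally \eqref{addaa2} makes $u$ smooth with $\Delta u\in L^{2}_{\loc,\IC}(M)$ and $\nabla u\in\Gamma_{L^{4}_{\loc,\IC}}(M,TM)$, so every quantity entering \eqref{F31} is at least locally meaningful.

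The engine of the proof is to insert \eqref{mago}, written as $\Delta u=\Delta_{A,0}u+2\sqrt{-1}(A,\nabla u)-\sqrt{-1}(\nabla^{\dagger}A)u-|A|^{2}u$, into \eqref{F31} taken with $p=4$ and the admissible triples $a_{1}=b_{1}=\infty$, $a_{2}=b_{2}=2$, $a_{3}=b_{3}=2$, namely
$$
\|\nabla u\|_{4}^{4}\le\|u\|_{\infty}\,\|\nabla u\|_{4}^{2}\,\|\Delta u\|_{2}+2\|u\|_{\infty}\,\|\nabla u\|_{4}^{2}\,\|\nabla^{2}u\|_{2}.
$$
The contributions of $\Delta_{A,0}u$, of $(\nabla^{\dagger}A)u$ (square integrable thanks to $\nabla^{\dagger}A\in L^{4}$ and $u\in L^{4}_{\IC}$) and of $|A|^{2}u$ pair with the prefactor $\|\nabla u\|_{4}^{2}$ to produce harmless $\|\nabla u\|_{4}^{2}$-terms, whereas the magnetic cross term, bounded in $L^{2}$ by the integrability of $A$ tested against $\nabla u\in\Gamma_{L^{4}}(M,TM)$, produces a term proportional to $\|\nabla u\|_{4}^{3}$. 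Treating the Hessian summand through $CZ(2)$ — extended from $C^{\infty}_{c}(M)$ to our $u$ with the help of the Laplacian cut-off functions, which annihilate the first-order and Laplacian commutators created by the truncation — and re-expanding $\|\Delta u\|_{2}$ once more produces the same two kinds of terms. Altogether one is left with a polynomial inequality of the shape $X^{4}\le c_{1}X^{2}+c_{2}X^{3}$ for $X:=\|\nabla u\|_{4}$, whose coefficients depend only on $\|u\|_{\infty}$, $\|u\|_{2}$, $\|\Delta_{A,0}u\|_{2}$, $\|(\nabla^{\dagger}A)u\|_{2}$, $\||A|^{2}u\|_{2}$ and the constants of $CZ(2)$; this forces an a priori bound on $X$, i.e. $\nabla u\in\Gamma_{L^{4}}(M,TM)$.

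Once $\nabla u\in\Gamma_{L^{4}}(M,TM)$ is secured, the identity $\Delta u=\Delta_{A,0}u+2\sqrt{-1}(A,\nabla u)-\sqrt{-1}(\nabla^{\dagger}A)u-|A|^{2}u$ exhibits $\Delta u$ as a sum of functions in $L^{2}_{\IC}(M)$, giving $\Delta u\in L^{2}_{\IC}(M)$ and completing \eqref{addaa} for $f\in L^{2}_{\IC}(M)\cap L^{\infty}(M)$. The step I expect to be the main obstacle is precisely the $L^{2}$-control of the magnetic terms $(A,\nabla u)$ and $|A|^{2}u$, equivalently the verification that the cross term really is of lower order so that the absorption in the previous paragraph closes: this is where the integrability hypotheses \eqref{cv} on $A$ and $\nabla^{\dagger}A$ must be exploited to their full strength, and where a preliminary truncation of $(A,V)$ together with a limiting argument — again powered by the Laplacian cut-offs and by the integration-by-parts mechanism behind \eqref{E33} — is needed to guarantee that $\|\nabla u\|_{4}$ is finite before it is estimated.

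Finally, for arbitrary $f\in L^{2}_{\IC}(M)$ under the additional ultracontractivity assumption I would use the semigroup law $u=\mathrm{e}^{-(t/2)H_{A,V}}g$ with $g:=\mathrm{e}^{-(t/2)H_{A,V}}f$: ultracontractivity together with the diamagnetic domination forces $g\in L^{2}_{\IC}(M)\cap L^{\infty}(M)$, so the case already treated applies to $u$ at time $t/2$ and yields \eqref{addaa}.
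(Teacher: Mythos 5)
Your overall strategy is the same as the paper's: apply \eqref{F31} with $p=4$ and $a_1=b_1=\infty$, $a_2=b_2=a_3=b_3=2$, eliminate the Hessian via $CZ(2)$, control $\left\|u\right\|_{\infty}$ by Kato--Simon domination, and handle general $f\in L^2_{\IC}(M)$ by the semigroup law in the ultracontractive case (your last paragraph is exactly the paper's Step 3). However, the execution has two genuine gaps. First, $u=\mathrm{e}^{-tH_{A,V}}f$ is \emph{not} smooth: \eqref{addaa2} only gives $\Delta u\in L^2_{\loc,\IC}(M)$ and $\nabla u\in\Gamma_{L^4_{\loc,\IC}}(M,TM)$, and with $V\in L^2_{\loc}(M)$ and $A$ merely $L^4_{\loc}$ no better regularity is available. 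So neither \eqref{F31} (stated for $C^{\infty}$ functions) nor $CZ(2)$ (stated for $C^{\infty}_c$ functions) can be applied to $u$ as written; in particular $\nabla^2 u$ need not even be locally square-integrable, and the cut-off argument you invoke to ``extend $CZ(2)$ to $u$'' already presupposes global $L^2$ control of $\Delta u$, which is part of what is to be proved. Second, the polynomial inequality $X^4\le c_1X^2+c_2X^3$ with $X=\left\|\nabla u\right\|_4$ yields an a priori bound only if $X<\infty$ is known beforehand; you flag this yourself but defer it to an unspecified ``truncation and limiting argument'', and that deferred step is where essentially the entire difficulty of the theorem sits.

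The paper resolves both points by reversing your order of deduction. It first proves the clean two-term inequality \eqref{gn}, namely $\left\|\nabla v\right\|_4^2\le C_1\left\|v\right\|_{\infty}\left\|\Delta v\right\|_2+C_2\left\|v\right\|_{\infty}\left\|v\right\|_2$, valid for \emph{every} $v\in L^2(M)\cap L^{\infty}(M)$ with $\Delta v\in L^2(M)$: the Hessian is absorbed by $CZ(2)$ while $v$ is still smooth, the passage to noncompact support uses the Laplacian cut-offs, and the passage to nonsmooth $v$ uses Meyers--Serrin in the graph norm of $\Delta$. Then, instead of estimating $\left\|\nabla f_t\right\|_4$ directly, it estimates $\left\|\Delta(\psi_kf_t)\right\|_2$ for the compactly supported truncations $\psi_kf_t$ --- a quantity that \emph{is} finite for each $k$ by \eqref{addaa2} and the domain identities for $H_{A,V}$ --- and closes the estimate uniformly in $k$ by bounding the magnetic cross term as $\left\|(A,\nabla(\psi_kf_t))\right\|_2\le\left\|A\right\|_4\left\|\nabla(\psi_kf_t)\right\|_4\le\left\|A\right\|_4C(1/\epsilon)\left\|f_t\right\|_{\infty}+\left\|A\right\|_4C\epsilon\left\|\Delta(\psi_kf_t)\right\|_2$ via \eqref{gn} and absorbing the $\epsilon$-term. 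Weak compactness then gives $\Delta f_t\in L^2_{\IC}(M)$, and only afterwards does \eqref{gn} yield $\nabla f_t\in\Gamma_{L^4_{\IC}}(M,TM)$. To salvage your plan you should adopt this order: first establish an inequality that survives the loss of smoothness and contains no Hessian term, and then run the absorption on a family of truncations for which every norm involved is finite, rather than on $u$ itself.
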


As we have already observed, $M$ admits a sequence of Laplacian cut-off functions and satisfies $CZ(2)$, if $M$ is geodesically complete with Ricci curvature bounded from below by a constant. If in addition to geodesic completeness and a lower Ricci bound $M$ satisfies the volume non-collapsing condition
$$
\inf_{x\in M}\mu(B(x,r))>0\quad\text{ for all $r\in (0,\infty)$,}
$$
then $M$ is even ultracontractive. This follows from Li-Yau\rq{}s heat kernel estimates, which state that if $M$ is geodesically complete with Ricci curvature bounded from below by a constant, there are constants $C_1,C_2, C_3\in (0,\infty)$ such that
$$
\mathrm{e}^{-t H_{0,0}}(x,y)\leq C_1\mathrm{e}^{tC_2}\mathrm{e}^{-C_3\frac{d(x,y)^2}{t}} \mu(B(x,\sqrt{t})^{-1}\quad\text{ for all $t\in (0,\infty)$}
$$
(with an analogous lower bound).\vspace{2mm}

This paper is organized as follows: in section \ref{haupt} we prove Theorem \ref{main}, section \ref{an1} is devoted to the proof of Theorem \ref{th2}, and section \ref{an2} to the proof of Theorem \ref{uiuiui}. In section \ref{appendix-a} of the appendix the aforementioned result on the equality of the minimal and maximal $L^p$-realization of $\Delta$ under geodesic completeness is proved (cf. Theorem \ref{A}). In section \ref{appendix-b} of the appendix we have recorded a Meyers-Serrin smooting result for Riemannian manifolds, which will be used at several places, and finally section \ref{appendix-c} of the appendix contains a list of standard formulae from calculus on Riemannian manifolds that are used throughout the paper.

\section{Proof of Theorem \ref{main}}\label{haupt}

We first prove the formula
\begin{equation}\label{E5}
\Delta_{p}u =  |\nabla u|^{p-2}\Delta u +(p-2) |\nabla u|^{p-4} \nabla^2u(\nabla u,\nabla u).
\end{equation}
Indeed, one has
\begin{align*}
\Delta_{p}u&= \nabla^{\dagger} \ (|\nabla u|^{p-2}\nabla u)=- (\nabla |\nabla u|^{p-2}, \nabla u )+ |\nabla u|^{p-2}\nabla^{\dagger}\nabla u\\
&=-\big(\nabla (\nabla u,\nabla u)^{p/2-1}, \nabla u \big)+ |\nabla u|^{p-2}\Delta  u\\
&=-(p/2-1)(\nabla u,\nabla u)^{p/2-2}\big(  \nabla (\nabla u,\nabla u), \nabla u \big)+ |\nabla u|^{p-2}\Delta  u\\
&=-(p/2-1)(\nabla u,\nabla u)^{p/2-2}2 (\nabla^{TM}_{\nabla u}\nabla u,\nabla u)+ |\nabla u|^{p-2}\Delta  u\\
&=-(p-2)|\nabla u|^{p-4} \nabla^2u(\nabla u,\nabla u)+ |\nabla u|^{p-2}\Delta  u,
\end{align*}
where we have used (in this order) the product rule, the chain rule, the compatibility of the Levi-Civita connection with the Riemannian metric and, finally, the definition of $\nabla^2$.
Now (\ref{E5}) implies
\begin{equation}\label{E18}
|u\Delta_{p}u| \leq |u||\nabla u|^{p-2}|\Delta u| +(p-2) |u||\nabla u|^{p-2}|\nabla^2 u|,
\end{equation}
so that (\ref{F31}) follows from (\ref{E33}) and H\"older\rq{}s inequality (as $p\geq 2$). \\
It remains to prove (\ref{E33}), fix $0 \leq \vp \in C^{\infty}_{c}(M)$ and define the vector field 
 \begin{equation}\label{E7}
 X= \vp^p u |\nabla u|^{p-2}\nabla u\in \Gamma_{C^1_c}(M,TM).
 \end{equation}
Using again the product rule and the definition of the $p$-Laplacian
we can calculate
\begin{align*}
&\nabla^{\dagger}X= \vp^p u \Delta_p u- (\nabla(\vp^p u),|\nabla u|^{p-2}\nabla u)\\
&= \vp^p u \Delta_p u- u|\nabla u|^{p-2}(\nabla \vp^p ,\nabla u)- \vp^p|\nabla u|^{p}, 
\end{align*}
so that using the divergence theorem we have
 \begin{align}\label{E8}
\int|\nabla u|^{p}\vp^p d\mu= - \int \vp^p u \Delta_{p}u \  d\mu- \int  u ( |\nabla u|^{p-2}\nabla u, \nabla \vp^p  )d\mu=: J_{1} + J_{2}.  
 \end{align}
Clearly, one has
\begin{align}\label{E9}
 |J_{1}| &  \leq \int  \vp^p |u\Delta_{p} u|d\mu.
\end{align}
On the other hand, with $\nabla \vp^p= p \vp^{p-1} \nabla \vp$ and $1/q:=1-1/p$, Young's inequality implies that for all $\e\in(0,\infty)$ we have
\begin{align}\label{E10}
 |J_{2}| &\leq  p \int  \left( |u| |\nabla \vp| \right) \left( \vp |\nabla u|\right)^{p-1} d\mu\\ \nonumber
&\leq \frac{p \e^{q}}{ q}\int  |\nabla u|^{p} \vp^pd\mu + \frac{1}{\e^{p} }\int  |u|^{p} |\nabla \vp|^{p} d\mu\nonumber
\end{align}
Using \eqref{E9}, \eqref{E10} and \eqref{E8} it follows that, for $0<\e <(q/p)^{1/q}$, the term $\int|\nabla u|^{p}\vp^p d\mu$ on the RHS can be absorbed into the LHS and we get:
\begin{align}\label{E11}
 \int |\nabla u|^{p}\vp^p d\mu \leq \frac{1}{1-p\e^q/q}  \int  \vp^p |u\Delta_{p}u| d\mu+\frac{1}{\e^p (1-p\e^q/q)} \int |u|^{p} |\nabla \vp|^{p}d\mu.
\end{align}
As $M$ is geodesically complete, we can pick a sequence $ \vp = \psi_{k} \in C^{\infty}_{c}(M)$ of first order cut-off functions. Taking limits as $k \to  \infty$, using monotone and dominated convergence theorems, and taking $\e\to 0+$ afterwards, we finally obtain the desired estimate (\ref{E33}).

\section{Proof of Theorem \ref{th2}}\label{an1}

a) If $p\geq 2$, and $u \in C^{\infty}(M)$, by applying Theorem \ref{main} b) with $a_{1}=a_{2}= b_{1}=b_{2}=p$ and $a_{3}= b_{3} = p/(p-2)$ we obtain
\begin{equation}\label{p>2}
 \| \nabla u \|_{p}^{2} \leq \| u \|_{p} \| f \|_{p} +(p-2) \| u \|_{p} \|\nabla^{2} u \|_{p}
\end{equation}
which is precisely \eqref{W1p0} with $C =1$. In the general case, by a Meyers-Serrin\rq{}s theorem (cf. Theorem \ref{ms} in Appendix \ref{appendix-b}), we can pick a sequence $(u_k)\subset  C^{\infty}(M)$ with 
$$
\left\|\nabla^2 u_k-\nabla^2 u\right\|_p\to 0,\quad \left\|\Delta u_k- f\right\|_p\to 0, \quad \left\| u_k- u\right\|_p\to 0. 
$$
Then \eqref{p>2} shows that $\nabla u_k$ is a Cauchy sequence in $\Gamma_{L^p}(M,TM)$, which necessarily converges to $\nabla u$. Therefore, evaluating \eqref{p>2} along $u_{k}$ and taking the limit as $k\to +\infty$ completes the proof.\\
If $1<p<2$, and $u \in C^{\infty}_{c}(M)$, by Theorem 4.1 in \cite{CD} we have that
\begin{equation}\label{CD}
 \| \nabla u \|_{p}^{2} \leq C_{p} \| u \|_{p} \|f\|_{p},
\end{equation}
for some absolute constant $C_{p}>0$. This is precisely what is stated in \eqref{W1p0}. In the general case, we appeal to Theorem \ref{A} from Appendix \ref{appendix-a} in order to pick a sequence $(u_k)\subset C^{\infty}_c(M)$ such that 
$$
\quad \left\|\Delta u_k-\Delta u\right\|_p\to 0, \quad \left\| u_k- u\right\|_p\to 0.
$$
By \eqref{CD}, for all $k,h \in \nn$, one has
$$
\left\|\nabla (u_{k}- u_{h}) \right\|_p^{2}  \leq   C_p \left\|u_{k}-u_{h} \right\|_{p}   \left\|\Delta (u_{k} - u_{h})\right\|_{p}.
$$
Whence, we deduce again that $\nabla u_k$ is a Cauchy sequence in $\Gamma_{L^p}(M,TM)$, which necessarily converges to $\nabla u$. To  conclude the validity of \eqref{W1p0} we now evaluate \eqref{CD} along $u_{k}$ and take the limit as $k \to +\infty$.
\\
b) Assume first $\nabla u\in \Gamma_{L^p}(M, TM)$. By Meyers-Serrin we can pick a sequence $(u_k)\subset  C^{\infty}(M)$ with 
$$
\quad \left\|\Delta u_k-  f\right\|_p\to 0, \quad \left\| u_k- u\right\|_p\to 0. 
$$
Then Proposition 3.8 in \cite{GP} implies 
$$
\left\|\nabla^2(u_k-u_{h})\right\|_p\leq C  \left\|\Delta (u_k - u_{h})\right\|_p  + C \| u_{k} - u_{h} \|_{p}
$$
for every $k,h \in \nn$ and for some constant $C>0$ which only depends on the $CZ(p)$ constants. Therefore, with the same Cauchy-sequence argument as above,
$$
\left\|\nabla^2u\right\|_p\leq C \left\| f \right\|_p + C \| u \|_{p} .
$$
If $\max\{p-2,0\} \nabla^2 u\in \Gamma_{L^p}(M, T^{0,2}M)$, then by part a) for every $\epsilon\in (0,\infty)$ we can pick $C_{\epsilon}\in (0,\infty)$ such that
$$
\left\|\nabla u\right\|_p  \leq   C_p \left\|u\right\|_{p}+ C_p   \left\|f\right\|_{p}+\max\{p-2,0\} C_{\epsilon} \left\|u\right\|_{p}+ \epsilon  \left\|\nabla^2 u\right\|_{p}.
$$
Combining these two estimates yields (\ref{insa}).

 \section{Proof of Theorem \ref{uiuiui}}\label{an2}

We start with the following result, which is well-known in the Euclidean case, but has only been recorded so far for smooth magnetic potentials in the case of manifolds:

\begin{proposition}[Kato-Simon inequality] Assume $M$ is geodesically complete and 
\begin{align*} 
0\leq V\in L^2_{\loc}(M),\quad A\in \Gamma_{L^2_{\loc}}(M,TM),\quad  \nabla^{\dagger} A\in L^4_{\loc}(M).
\end{align*}
Then for all $t\in (0,\infty)$, $f\in L^2_{\IC}(M)$, and $\mu$-a.e. $x\in M$ one has
\begin{align*}
\left| \mathrm{e}^{-t H_{A,V}}f(x)\right|\leq \left|\mathrm{e}^{-t H_{0, 0}}f(x)\right|.
\end{align*}
\end{proposition}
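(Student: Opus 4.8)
**The plan is to prove the Kato–Simon diamagnetic inequality by comparing the magnetic semigroup with the scalar heat semigroup through their quadratic forms, using the standard Trotter–Kato and Kato-inequality machinery.**

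The Kato–Simon (diamagnetic) inequality
$$
\left|\mathrm{e}^{-t H_{A,V}}f(x)\right|\leq \mathrm{e}^{-t H_{0,0}}|f|(x)
$$
is classical in $\mathbb{R}^m$, and the proof I would carry out is the manifold analogue of the form-theoretic argument (as in Hess–Schrader–Uhlenbrock or Simon). First I would reduce to the case $f \in L^2_{\IC}(M)\cap L^\infty(M)$, or even $f \in C^\infty_{c,\IC}(M)$, by density and the $L^2$-continuity of both semigroups, so that the desired pointwise inequality extends to all $f \in L^2_{\IC}(M)$. Throughout I would work with the self-adjoint operators $H_{A,V}$ and $H_{0,0}$ obtained as the closures (equivalently, the form-sum realizations) guaranteed by the hypotheses and the essential self-adjointness results cited in the excerpt.

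The technical heart is Kato's inequality in the distributional form: for a smooth magnetic potential one has the pointwise bound
$$
\big(\mathrm{sgn}\,\overline{\Psi}\big)\,\mathrm{Re}\,(\nabla - \sqrt{-1}A)^\dagger(\nabla - \sqrt{-1}A)\Psi \;\geq\; \Delta|\Psi|
$$
in the sense of distributions, for suitable $\Psi$. I would first establish the form-analogue, namely that $\big\||\nabla|\Psi|\,|\big\|_2 \leq \big\|(\nabla - \sqrt{-1}A)\Psi\big\|_2$, which is the diamagnetic inequality at the level of Dirichlet energies and follows from the algebraic pointwise bound $|\nabla|\Psi|| \leq |(\nabla - \sqrt{-1}A)\Psi|$ valid $\mu$-a.e. (a purely pointwise consequence of $\nabla|\Psi|^2 = 2\,\mathrm{Re}\,(\overline\Psi\,(\nabla-\sqrt{-1}A)\Psi)$). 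Combined with $V\geq 0$, this gives the form-domination of the Schrödinger form by the free Dirichlet form on absolute values. The passage from this energy comparison to the pointwise semigroup inequality is then handled by the abstract Beurling–Deny/Ouhabaz-type criterion for domination of positivity-preserving semigroups: the energy inequality on $|\Psi|$, together with positivity preservation of $\mathrm{e}^{-tH_{0,0}}$, yields $\left|\mathrm{e}^{-tH_{A,V}}f\right| \leq \mathrm{e}^{-tH_{0,0}}|f|$.

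The main obstacle is the low regularity of the potentials: $A$ and $V$ are only locally $L^2$, and $\nabla^\dagger A$ only locally $L^4$, so the manifold Kato inequality and the form domination cannot be read off directly from the smooth theory. The plan to overcome this is to approximate by smooth, compactly truncated potentials $A_j \to A$, $V_j \to V$ (for instance via Friedrichs mollification in local charts combined with the Laplacian-cutoff structure available on $M$), prove the inequality for each $(A_j,V_j)$ where the classical smooth argument applies verbatim, and then pass to the limit. For the limiting step I would invoke strong resolvent (or strong semigroup) convergence $\mathrm{e}^{-tH_{A_j,V_j}} \to \mathrm{e}^{-tH_{A,V}}$ in $L^2_{\IC}(M)$, which follows from convergence of the associated quadratic forms in the sense of monotone or Mosco convergence; since a pointwise inequality is preserved under $L^2$-convergence along a $\mu$-a.e.\ convergent subsequence, the inequality survives the limit. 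I would expect the care needed in verifying form convergence under these singular hypotheses — in particular controlling the cross term $(A_j,\nabla\Psi)$ and the term $(\nabla^\dagger A_j)\Psi$ using the stated integrability — to be the most delicate part of the argument, while the diamagnetic energy bound itself is elementary and the abstract domination criterion is off-the-shelf.
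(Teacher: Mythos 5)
Your overall skeleton---prove the inequality for smooth magnetic potentials, then approximate a singular $A$ by smooth ones and pass to an a.e.\ limit---is exactly the paper's strategy, but the two technical mechanisms you choose differ from the paper's in ways worth noting. For the smooth case the paper simply cites Theorem VII.8 of \cite{gubo}; your sketch via the pointwise diamagnetic bound $|\nabla|\Psi||\leq|(\nabla-\sqrt{-1}A)\Psi|$ plus an Ouhabaz-type domination criterion is a legitimate proof of that cited result, so no objection there (note only that the paper states the conclusion with $|\mathrm{e}^{-tH_{0,0}}f(x)|$ on the right, while you prove the standard form with $\mathrm{e}^{-tH_{0,0}}|f|(x)$; the latter is what the classical machinery actually delivers and is what is used later in the paper). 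The real divergence is in the limiting step. You propose to deduce strong semigroup convergence $\mathrm{e}^{-tH_{A_j,V}}\to\mathrm{e}^{-tH_{A,V}}$ from monotone or Mosco convergence of the quadratic forms, and you correctly identify this as the delicate point---indeed the magnetic forms are not monotone in $A$, and verifying the two Mosco conditions for $A\in\Gamma_{L^2_{\loc}}$ is genuinely nontrivial and is left open in your plan. The paper sidesteps this entirely: under the stated local integrability hypotheses, $C^{\infty}_{c,\IC}(M)$ is a common operator core for all the operators $H_{A_{k,n},V}$, $H_{A_k,V}$, $H_{A,V}$ (the essential self-adjointness results of \cite{GK,LS}), so it suffices to check $\|H_{A_{k,n},V}\Psi-H_{A_k,V}\Psi\|_2\to 0$ for each fixed $\Psi\in C^{\infty}_{c,\IC}(M)$, which is immediate from the explicit expansion \eqref{mago} and $L^2$-convergence of $A_{k,n}$ and $\nabla^{\dagger}A_{k,n}$ on the compact support of $\Psi$; convergence on a common core of self-adjoint operators yields strong resolvent, hence strong semigroup, convergence by a standard theorem. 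The paper also uses a two-stage approximation (first $A_k=\psi_k A$ with first order cut-offs, then Meyers--Serrin mollification of each $A_k$) so that each intermediate convergence is genuinely global $L^2$ rather than merely local. If you replace your Mosco-convergence step by this operator-core argument, your proof closes; as written, that step is the one gap you would still have to fill.
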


\begin{proof} If $A$ is smooth, the asserted inequality follows from Theorem VII.8 in \cite{gubo} (see also \cite{G2}). \\
In the general case, we pick a sequence $(\psi_k)_{k\in\mathbb{N}}\subset C^{\infty}_{c}(M)$ of first order cut-off functions. Then by the Meyers-Serrin theorem, for every $k\in\mathbb{N}$, we can pick a sequence $(A_{k,n})_{n\in\mathbb{N}}\subset \Gamma_{C^{\infty}}(M,TM)$ such that with 
$$
A_k:=\psi_kA
$$
one has
$$
\text{$\lim_{n\to\infty}A_{k,n}= A_k$ in $\Gamma_{L^2}(M,TM)$ and $\lim_{n\to\infty}\nabla^{\dagger}A_{k,n}= \nabla^{\dagger} A_k $ in $L^2(M)$.}
$$
In particular, using (\ref{mago}), for all $\Psi$ in the common operator core $C^{\infty}_{c,\IC}(M)$ of $H_{A_{k,n},V}$ and $H_{A_k,V}$ one has 
$$
\lim_{n\to\infty}\left\|H_{A_{k,n},V} \Psi-H_{A_k,V} \Psi\right\|_2= 0,\quad\text{ so that $\lim_{n\to\infty}\mathrm{e}^{-t H_{A_{k,n},V}}=\mathrm{e}^{-t H_{A_k,V}}$ strongly in $L^{2}_{\IC}(M)$,}
$$
and so
$$
\lim_{n\to\infty}\mathrm{e}^{-t H_{A_{k,n},V}}f(x)=\mathrm{e}^{-t H_{A_k,V}}f(x)\quad\text{ for $\mu$-a.e. $x\in M$,}
$$
possibly by taking a subsequence. \\
Likewise, using the product formula
$$
\nabla^{\dagger} A_k =- (\nabla \psi_k,A )+ \psi_k \nabla^{\dagger}A
$$
one gets
$$
\text{$\lim_{k\to\infty}A_k=  A$ in $\Gamma_{L^2_{\loc}}(M,TM)$ and $\lim_{k\to\infty}\nabla^{\dagger}A_k= \nabla^{\dagger}A$ in $L^2_{\loc}(M)$,}
$$
and so, for all $\Psi$ in the common operator core $C^{\infty}_{c,\IC}(M)$ of $H_{A_k,V}$ and $H_{A,V}$ it holds that
$$
\lim_{k\to\infty}\left\|H_{A_k,V} \Psi-H_{A,V} \Psi\right\|_2= 0,\quad\text{ so that $\lim_{k\to\infty}\mathrm{e}^{-t H_{A_k,V}}=\mathrm{e}^{-t H_{A,V}}$ strongly in $L^{2}_{\IC}(M)$,}
$$
and we arrive at (possibly by taking a subsequence)
$$
\lim_{k\to\infty}\lim_{n\to\infty}\mathrm{e}^{-t H_{A_{k,n},V}}f(x)= \mathrm{e}^{-t H_{A,V}}f(x)\quad\text{ for $\mu$-a.e. $x\in M$.}
$$
This reduces the proof of the Kato-Simon for nonsmooth $A$'s to the aforementioned smooth case.
\end{proof}

\begin{proof}[Proof of Theorem \ref{uiuiui}]

\emph{Step 1: One has 
\begin{align}\label{gn}
 \left\|\nabla u\right\|^2_4  \leq   C_1 \left\| u\right\|_{\infty}  \left\|\Delta  u \right\|_{2}+C_2 \left\| u\right\|_{\infty}  \left\| u \right\|_{2}\quad\text{for all $u\in L^{\infty}(M)\cap L^{2}(M)$ with $\Delta  u\in L^2(M)$,}
\end{align}
for some constants $C_{1},C_{2}>0$ which only depend on the constant from $CZ(2)$.} To see this, we can assume $u$ is real-valued (if not, we decompose $u$ into its real-part and its imaginary-part and use the triangle inequality). We first assume that that $u$ is in addition smooth and pick a sequence $(\psi_k)\subset C^{\infty}_c(M)$ of Laplacian cut-off functions. Then one has (\ref{gn}) with $u$ replaced by $u_k:=\psi_k u$ by Theorem \ref{main} and $CZ(2)$. Using the product rules
$$
\nabla u_k =u\nabla \psi_k+\psi_k\nabla u
$$
and
$$
\Delta u_k = \psi_k \Delta u + u \Delta \psi_k + 2 (\nabla \psi_k, \nabla u)
$$
and that at $u$, $\Delta u$ and $\nabla u$ are $L^2$ (the latter follows, for example, from Theorem \ref{th2} a)), the inequality extends by Fatou and dominated convergence to $u$, taking $k\to \infty$. In the general case, by $u$, $\Delta u\in L^2(M)$ using Meyers-Serrin's theorem we can pick a sequence $(u_k)\subset C^{\infty}(M)$ with $u_k$, $\Delta u_k\in L^2(M)$ with 
$$
\left\|u_k-u\right\|_2\to 0, \quad \left\|\Delta u_k-\Delta u\right\|_2\to 0
$$
 and in addition 
$$
\left\|u_k\right\|_{\infty}\leq \left\|u\right\|_{\infty} \quad\text{ for all $k$.}
$$
 Using (\ref{gn}) with $u_k$ shows that $\nabla u_k$ is Cauchy in $\Gamma_{L^4}(M,TM)$ and then one necessarily has 
$$
\left\|\nabla u_k-\nabla u\right\|_{4}\to 0. 
$$
\emph{Step 2: For all $f\in L^2_{\IC}(M)\cap L^{\infty}(M)$, $t\in(0,\infty)$ one has (\ref{addaa}).} To prove that, we set $f_t:=\mathrm{e}^{-t H_{A,V}}f$ and record that by the Kato-Simon inequality one has the first inequality in 
\begin{align}\label{gn0}
\left\| f_t\right\|_{\infty}\leq \left\|\mathrm{e}^{-t H_{0, 0}}f\right\|_{\infty}\leq \left\|f\right\|_{\infty}<\infty,
\end{align}
where the second inequality follows from noting that
$$
\int \mathrm{e}^{-t H_{0, 0}}(x,y) d\mu (y)\leq 1\quad\text{ for all $x\in M$, $t\in (0,\infty)$,}
$$
as $H_{0, 0}$ stems from a Dirichlet form. Pick now a sequence $(\psi_k)\subset C^{\infty}_c(M)$ of Laplacian cut-off functions. Our aim is to prove 
\begin{align}\label{aspos}
\sup_{k\in \mathbb{N}}\left\|\Delta (\psi_kf_t)\right\|_2<\infty.
\end{align}
Indeed, then $(\Delta (\psi_kf_t))_k$ has a subsequence which converges weakly to some $h\in L^2_{\IC}(M)$, but as we have $\left\|\psi_kf_t- f_t\right\|_2\to 0$, we have $\Delta f_t=h\in L^2_{\IC}(M)$. Then, applying (\ref{gn}) with $u=f_t$ using (\ref{gn0}) also shows $\nabla f_t\in \Gamma_{L^4_{\IC}}(M,TM)$.\\
Thus it remains to prove (\ref{aspos}): To this end, by the spectral calculus we have 
$$
\mathrm{Dom}(H_{A,V})\subset \mathrm{Dom} (\sqrt{H_{A,V}})
$$
and $f_t\in \mathrm{Dom}(H_{A,V})$, and from essential self-adjointness
$$
\mathrm{Dom}(H_{A,V})=\{u\in L^2_{\IC}(M): \Delta_{A,V}u\in  L^2_{\IC}(M)\},\>\>H_{A,V}u=\Delta_{A,V}
$$
and
$$
\mathrm{Dom}(\sqrt{H_{A,V}})=\{u\in L^2_{\IC}(M): (\nabla-\sqrt{-1}A)f\in \Gamma_{L^2_{\IC}}(M, TM),\>\sqrt{V}f\in L^2_{\IC}(M) \}.
$$
It follows from a simple calculation that $\psi_kf_t\in \mathrm{Dom}(H_{A,V})$ with
\begin{align}\label{pqa}
H_{A,V}(\psi_k f_t)=\psi_k \Delta_{A,V}f+ 2\big(\nabla \psi_k ,(\nabla-\sqrt{-1}A) f_t\big) -(\Delta\psi_k) f_t.
\end{align}
On the other hand, from $(\psi_k f_t)\in \mathrm{Dom}(\sqrt{H_{A,V}})$ we have 
$$
(\nabla-\sqrt{-1}A)(\psi_k f_t)\in \Gamma_{L^2_{\IC} }(M,TM)
$$
which from the assumption on $A$ easily implies
\begin{align}\label{aaspopo}
\nabla(\psi_k f_t)=(\nabla-\sqrt{-1}A)(\psi_k f_t)-\sqrt{-1}A(\psi_k f_t)\in \Gamma_{L^2_{\IC}}(M,TM)
\end{align}
as $\psi_k f_t$ is bounded with a compact support. Likewise, it follows from (\ref{aaspopo}) and the assumptions on $A$ and $V$ that 
\begin{align*}
\Delta (\psi_k f_t)=&\Delta_{A,V}(\psi_k f_t)+ 2 (A,\nabla (\psi_k f_t))\\
&-\sqrt{-1}(\nabla^{\dagger}A) \psi_k f_t-|A|^2 \psi_k f_t-V \psi_k f_t \in L^2_{\IC}(M),
\end{align*}
so that 
\begin{align*}
\left\|\Delta (\psi_k f_t)\right\|_2\leq&\left\|\Delta_{A,V}(\psi_k f_t)\right\|_2+ 2\left\| (A,\nabla (\psi_k f_t))\right\|_2+
\left\|((\nabla^{\dagger}A)  -|A|^2 -V) \psi_k f_t\right\|_2 \\
\leq & \left\|\psi_k \Delta_{A,V}f\right\|_2+2\left\|\big(\nabla \psi_k ,(\nabla-\sqrt{-1}A) f_t\big)\right\|_2+\left\|(\Delta\psi_k) f_t\right\|_2\\
&+ 2\left\| (A,\nabla (\psi_k f_t))\right\|_2+\left\|((\nabla^{\dagger}A)  -|A|^2 -V) \right\|_2\left\|f\right\|_{\infty} \\
\leq & \left\| \Delta_{A,V}f\right\|_2+2\sup_k\left\| \nabla \psi_k\right\|_{\infty}\left\| (\nabla-\sqrt{-1}A) f_t \right\|_2+\sup_k\left\|(\Delta\psi_k)\right\|_{\infty}\left\| f_t\right\|_2\\
&+ 2\left\| (A,\nabla (\psi_k f_t))\right\|_2+\left\|((\nabla^{\dagger}A)  -|A|^2 -V) \right\|_2\left\|f\right\|_{\infty}.  
\end{align*}
Finally, using (\ref{gn}), for every $\epsilon \in (0,\infty)$ we have
$$
\left\| (A,\nabla (\psi_k f_t))\right\|_2\leq \left\| A\right\|_4\left\|\nabla (\psi_k f_t)\right\|_4\leq \left\| A\right\|_4
C(1/\epsilon)\left\|f_t\right\|_{\infty}+ \left\| A\right\|_4 C\epsilon  \left\|\Delta (\psi_k f_t)\right\|_2 ,
$$
completing the proof of (\ref{aspos}).\vspace{2mm}

\emph{Step 3: Removal of the assumption $f\in L^{\infty}(M)$ in the ultracontractive case.} If in addition $M$ is ultracontractive, then for all $s\in(0,\infty)$ one has that $\mathrm{e}^{-s H_{0,0}}$ maps $L^2_{\IC}(M)\to L^{\infty}_{\IC}(M)$, so by (\ref{gn0}) the same is true for $\mathrm{e}^{-s H_{A,V}}$. Thus, for all $f\in L^{2}_{\IC}(M)$, $t\in(0,\infty)$, one has
$$
\mathrm{e}^{-t H_{A,V}}f=  \mathrm{e}^{-(t/2) H_{A,V}}\tilde{f},
$$
where $\tilde{f}:=\mathrm{e}^{-(t/2) H_{A,V}}f\in L^{2}_{\IC}(M)\cap L^{\infty}(M)$, $t\in(0,\infty)$, so the claim follows from Step 2. This completes the proof.

\end{proof}

\appendix

\section{ $\Delta_{\min, p}= \Delta_{\max, p}$ under geodesic completeness\\ (by Ognjen Milatovic)}\label{appendix-a}

Let $M$ be a smooth Riemannian manifold. Given a linear partial differential operator
$$
\mathscr{T}:C^{\infty}(M)\longrightarrow C^{\infty}(M)
$$
with smooth coefficients and $p\in (1,\infty)$, we define a closable operator $\mathscr{T}_p$ in $L^p(M)$ as follows:
$$
\mathrm{Dom}(\mathscr{T}_p)=C^{\infty}_c(M),\quad  \mathscr{T}_pf:=\mathscr{T}f.
$$
Then one can further define two closed extensions in $L^p(M)$ of $\mathscr{T}_p$ as follows: $\mathscr{T}_{\min, p}$ is defined as the closure of $\mathscr{T}_p$ in $L^p(M)$, and $\mathrm{Dom}(\mathscr{T}_{\max, p})$ is defined to be the space of all $f\in L^p(M)$ such that $\mathscr{T} f\in L^p(M)$ (distributionally), with $\mathscr{T}_{\max, p}f:= \mathscr{T} f$ for such $f$\rq{}s. Assuming $M$ has a $C^{\infty}$-bounded geometry it has been shown in \cite{Shub,Mi} that $\Delta_{\min, p}= \Delta_{\max, p}$. The main result of this section shows that in fact one can completely remove any curvature and injectivity radius for the equality $\Delta_{\min, p}= \Delta_{\max, p}$:

\begin{theorem}\label{A} Let $M$ be geodesically complete and let $p\in (1,\infty)$. Then one has $\Delta_{\min, p}= \Delta_{\max, p}$, in other words, $C^{\infty}_c(M)$ is an operator core for $\Delta_{\max, p}$. Moreover, $\Delta_{\max, p}$ generates a strongly continuous contraction semigroup in $L^p(M)$.
\end{theorem}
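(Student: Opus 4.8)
The plan is to recognize the statement as an $m$-dissipativity result and to invoke the Lumer--Phillips theorem. Concretely, I would fix $\lambda>0$ and let $q\in(1,\infty)$ be the conjugate exponent, $1/p+1/q=1$. The whole argument rests on a single analytic input, an $L^r$-Liouville property: for every $r\in(1,\infty)$, if $v\in L^r(M)$ satisfies $\Delta v+\lambda v=0$ distributionally, then $v\equiv 0$. Granting this for $r=p$ and $r=q$, both the core equality $\Delta_{\min,p}=\Delta_{\max,p}$ and the semigroup generation will follow from soft functional analysis, as I now describe.

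First I would record the \emph{dissipativity} of $-\Delta$ on $C^\infty_c(M)$. Testing $(\lambda+\Delta)u$ against $|u|^{p-2}u\in L^q(M)$ and integrating by parts gives
\begin{equation*}
\int \big((\lambda+\Delta)u\big)\,|u|^{p-2}u\,d\mu=\lambda\|u\|_p^p+(p-1)\int |u|^{p-2}|\nabla u|^2\,d\mu\ge \lambda\|u\|_p^p ,
\end{equation*}
whence H\"older yields $\|(\lambda+\Delta)u\|_p\ge \lambda\|u\|_p$ for all $u\in C^\infty_c(M)$. Thus $\lambda+\Delta_{\min,p}$ is injective with closed range and $\|(\lambda+\Delta_{\min,p})^{-1}\|\le 1/\lambda$ on its range. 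To upgrade this to surjectivity I would show the range is dense: a functional $v\in L^q(M)=(L^p(M))^*$ annihilating $\operatorname{Ran}(\lambda+\Delta_{\min,p})$ satisfies $\int u\,(\lambda+\Delta)v\,d\mu=0$ for all $u\in C^\infty_c(M)$ by formal self-adjointness of $\Delta$, i.e. $v\in\ker(\lambda+\Delta_{\max,q})$, so $v=0$ by the Liouville property with $r=q$. Hence $\lambda+\Delta_{\min,p}$ is bijective onto $L^p(M)$. Since $\Delta_{\min,p}\subseteq\Delta_{\max,p}$ and $\lambda+\Delta_{\max,p}$ is injective (Liouville with $r=p$), a standard squeeze forces $\Delta_{\min,p}=\Delta_{\max,p}=:\Delta_p$: indeed any $f\in\operatorname{Dom}(\Delta_{\max,p})$ differs from the $\Delta_{\min,p}$-preimage of $(\lambda+\Delta_{\max,p})f$ by an element of $\ker(\lambda+\Delta_{\max,p})=\{0\}$. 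Finally $-\Delta_p$ is densely defined, dissipative, and $\operatorname{Ran}(\lambda+\Delta_p)=L^p(M)$, so Lumer--Phillips (Hille--Yosida) delivers the strongly continuous contraction semigroup.

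The crux is the Liouville property, and it is the \emph{only} place geodesic completeness enters, through the existence of a sequence $(\psi_k)\subset C^\infty_c(M)$ of first order cut-off functions. Given $v\in L^r(M)$ with $\Delta v=-\lambda v$, elliptic regularity makes $v$ smooth; testing against $\psi_k^2|v|^{r-2}v$ and integrating by parts gives, after a Young inequality absorbing the gradient term,
\begin{equation*}
\tfrac{r-1}{2}\int \psi_k^2|v|^{r-2}|\nabla v|^2\,d\mu+\lambda\int \psi_k^2|v|^r\,d\mu\le \tfrac{2}{r-1}\,\|\nabla\psi_k\|_\infty^2\,\|v\|_r^r .
\end{equation*}
Since $\|\nabla\psi_k\|_\infty\to 0$ and $\psi_k\nearrow 1$, letting $k\to\infty$ and using monotone convergence forces $\lambda\|v\|_r^r\le 0$, hence $v\equiv 0$. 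I expect the main obstacle to be the rigorous justification of this computation in the range $1<r<2$, where $|v|^{r-2}$ is singular at the zeros of $v$: this should be handled by the routine device of replacing $|v|$ by $(v^2+\delta)^{1/2}$, carrying the estimate through, and letting $\delta\to 0$; by the same regularization the integration by parts in the dissipativity step is justified.
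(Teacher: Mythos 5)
Your argument is correct in outline and would yield the theorem, but it travels a genuinely different road from the one in the paper. The paper's proof is entirely soft: it identifies $(\Delta_{\min,q})^*=\Delta_{\max,p}$ via distribution theory, imports Strichartz's theorem that $\Delta_{\min,r}$ generates a strongly continuous contraction semigroup on $L^r(M)$ for complete $M$ as a black box, uses the fact that adjoints of such generators in reflexive Banach spaces again generate, and concludes $\Delta_{\min,p}=\Delta_{\max,p}$ because an accretive extension of a maximally accretive operator must coincide with it. Geodesic completeness never appears explicitly; it is hidden inside the citation of Strichartz. Your proof, by contrast, essentially re-derives Strichartz's generation theorem from scratch: dissipativity of $-\Delta$ on $C^\infty_c(M)$ by integration by parts, surjectivity of $\lambda+\Delta_{\min,p}$ reduced by the same min-$q$/max-$p$ duality to an $L^q$-Liouville theorem for $\Delta v=-\lambda v$, injectivity of $\lambda+\Delta_{\max,p}$ from the $L^p$-Liouville theorem, the standard squeeze between a bijective operator and an injective extension, and finally Lumer--Phillips. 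What your route buys is self-containedness and transparency: completeness enters exactly once, through the first order cut-off functions in the Liouville argument, which is the same mechanism the paper uses elsewhere (e.g.\ in the proof of Theorem \ref{main}). What it costs is length and the regularization technicalities you flag yourself: for $1<r<2$ the test functions $|u|^{r-2}u$ and $\psi_k^2|v|^{r-2}v$ must be replaced by $(u^2+\delta)^{(r-2)/2}u$ etc., and one must check that the coefficient $((r-1)v^2+\delta)(v^2+\delta)^{(r-4)/2}$ arising in the gradient of the regularized test function keeps the correct sign before passing to the limit $\delta\to 0$. These are standard Kato--Yau-type manipulations and do not constitute a gap, but they are precisely the details the paper's abstract argument avoids. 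One cosmetic point: since the paper's $\Delta=\nabla^\dagger\nabla$ is the nonnegative Laplacian, the generator produced by Lumer--Phillips is $-\Delta_p$; the paper's own phrasing that ``$\Delta_{\max,p}$ generates'' the semigroup uses the opposite sign convention inherited from Strichartz, so you should fix one convention and state it.
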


\begin{proof} It follows from distribution theory (cf. Lemma I.25 in \cite{gubo}) that under the isometric identification $L^p(M)=L^{q}(M)^*$ (where $q\in (1,\infty)$ is defined by $1/p+1/q=1$), the adjoint $(\mathscr{T}_{\min,q})^*$ for every $\mathscr{T}$ as above is given by $(\mathscr{T}_{\min,q})^*=(\mathscr{T}^{\dagger})_{\max,p}$, where 
$$
\mathscr{T}^{\dagger}:C^{\infty}(M)\longrightarrow C^{\infty}(M)
$$
denotes the formal adjoint of $\mathscr{T}^{\dagger}$. In particular, $(\Delta_{\min,q})^*= \Delta_{\max,p}$. It has been shown in \cite{St} that under geodesic completeness $\Delta_{\min,r}$ is the generator of a  strongly continuous contraction semigroup in $L^r(M)$ for all $r\in (1,\infty)$. As adjoints of generators of strongly continuous contraction semigroups in reflexive Banach spaces again generate such semigroups (\cite{ABHN}, p. 138), this property remains true for $\Delta_{\max,p}$. As generators of  strongly continuous contraction semigroups are maximally accretive (\cite{RS}, p. 241), it follows that $\Delta_{\max,p}$ is an accretive extension of the maximally accretive operator $\Delta_{\min,p}$ and so $\Delta_{\min,p}=\Delta_{\max,p}$.  
\end{proof}

Note that $\Delta_{\min, p}= \Delta_{\max, p}$ is equivalent to the following density result: For every $f\in L^p(M)$ (w.l.o.g smooth by Meyers-Serrin; cf. Theorem \ref{ms} below) with $\Delta f\in L^p(M)$ there exists a sequence $(f_k)\subset C^{\infty}_c(M)$ such that as $k\to\infty$,
$$
\left\|f_k-f\right\|_p\to 0,\quad \left\|\Delta f_k-\Delta f\right\|_p\to 0.
$$
It is remarkable that even assuming the existence of Laplacian cut-off functions, there seems to be no way to prove this density by hand, that is, without using some functional analytic machinery. In fact, this \lq\lq{}phenomenon\rq\rq{} already occurs for $p=2$.\vspace{2mm}

Let $p\in (1,\infty)$. The paper \cite{Mi} deals with operator core problems as in Theorem \ref{A} in the situation where $\Delta$ is replaced with the Schr\"odinger operator $\Delta+V$ with $0\leq V\in L^p_{\mathrm{loc}}(M)$. In fact, the main result therein shows that $C^{\infty}_c(M)$ is an operator core for $\Delta_{V,\max, p}$, if $M$ has a $C^{\infty}$-bounded geometry, and if $\Delta_{V,\max, p}$ is the closed operator in $L^p(M)$ defined by
$$
\mathrm{Dom}(\Delta_{V,\max, p}):= \{f\in L^p(M): Vf\in L^1_{\mathrm{loc}}(M) ,(\Delta+V)f\in L^p(M)\}.
$$
The proof given there uses the $C^{\infty}$-boundedness assumption on $M$ only to prove that $M$ is $L^p$-positivity preserving in the language of \cite{babba} and that $\Delta_{\max, p}=\Delta_{\min, p}$, together with some perturbation theory. As by recent results it is known that geodesically complete Riemannian manifolds with a Ricci curvature bounded from below by a constant are $L^q$-positivity preserving (in fact also for $p=1$ and $p=\infty$) \cite{babba,alberto}, showing the following result which should be of an independent interest: 

\begin{theorem} Let $M$ be geodesically complete with a Ricci curvature bounded from below by a constant, and let $p\in (1,\infty)$, $0\leq V\in L^p_{\mathrm{loc}}(M)$. Then $C^{\infty}_c(M)$ is an operator core for $\Delta_{V,\max, p}$.
\end{theorem}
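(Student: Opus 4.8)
The plan is to deduce the statement from the main theorem of \cite{Mi} by weakening its geometric hypotheses. As the authors emphasize, the proof in \cite{Mi} that $C^{\infty}_c(M)$ is an operator core for $\Delta_{V,\max,p}$ uses the $C^{\infty}$-bounded geometry of $M$ only through two intermediate facts: that $M$ is $L^p$-positivity preserving in the sense of \cite{babba}, and that $\Delta_{\min,p}=\Delta_{\max,p}$. The rest of the argument — the Kato-type inequality handling the nonnegative singular potential, the local elliptic regularity giving meaning to $Vf$, and the duality-plus-accretivity step upgrading the unperturbed core property to the perturbed one — never refers again to the geometry of $M$. It therefore suffices to re-prove these two facts under the present hypotheses and then quote the remainder of \cite{Mi} verbatim.

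First I would record that $\Delta_{\min,p}=\Delta_{\max,p}$ is exactly Theorem \ref{A}; crucially this requires only geodesic completeness, and so is available here without any appeal to the Ricci bound. The lower Ricci bound enters solely through the second input: by the recent results of \cite{babba,alberto}, a geodesically complete manifold with Ricci curvature bounded below by a constant is $L^q$-positivity preserving for every exponent $q$ (indeed including the endpoints $q=1,\infty$). Since the hypotheses of the theorem are precisely geodesic completeness together with such a Ricci bound, this property is available for all $p\in(1,\infty)$, which is what the perturbation machinery of \cite{Mi} consumes.

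With both inputs secured, the conclusion follows from the perturbation scheme of \cite{Mi}, which is structurally parallel to the proof of Theorem \ref{A}. The $L^p$-positivity preservation, via Kato's inequality for the nonnegative potential, yields the accretivity of $(\Delta+V)_{\max,p}$; combining $\Delta_{\min,p}=\Delta_{\max,p}$ with the positivity preservation through a monotone approximation $V_n=\min\{V,n\}$ by bounded potentials (for which each minimal realization generates a contraction semigroup by a bounded perturbation) yields the maximal accretivity of $(\Delta+V)_{\min,p}$. As in Theorem \ref{A}, an accretive extension of a maximally accretive operator must coincide with it, so $(\Delta+V)_{\min,p}=(\Delta+V)_{\max,p}$, which is exactly the assertion that $C^{\infty}_c(M)$ is a core. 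The step I expect to be the main obstacle is not a new analytic estimate but the careful audit of \cite{Mi} required to confirm that $C^{\infty}$-bounded geometry genuinely enters only through the two stated facts — in particular, that no intermediate step covertly relies on uniform Sobolev embeddings, a positive injectivity radius, or uniform constants in local regularity. Once this is verified, no input beyond Theorem \ref{A} and the positivity-preservation results of \cite{babba,alberto} is needed.
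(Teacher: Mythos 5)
Your proposal is correct and follows essentially the same route as the paper: the authors likewise observe that the proof in \cite{Mi} uses $C^{\infty}$-bounded geometry only to obtain $L^p$-positivity preservation and the identity $\Delta_{\min,p}=\Delta_{\max,p}$, and then substitute Theorem \ref{A} (which needs only completeness) for the latter and the results of \cite{babba,alberto} for the former. Your additional reconstruction of the perturbation scheme in \cite{Mi} (Kato's inequality, truncation $V_n=\min\{V,n\}$, accretivity) goes beyond what the paper records but is consistent with it.
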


It is also reasonable to expect that using the techniques fro \cite{Mi2}, these results can be extended to covariant Schr\"odinger operators.

\section{A geometric Meyers-Serrin Theorem}\label{appendix-b}

The following result follows from the main result in \cite{GGP} and its proof:

\begin{theorem}\label{ms} Let $M$ be a smooth Riemannian manifold, let $E\to M$ be a smooth metric $\mathbb{K}$-vector bundle (where $\mathbb{K}\in \{\mathbb{R},\mathbb{C}\}$), and let $p\in (1,\infty)$. Then for every $f\in \Gamma_{L^p}(M,E)$ there exists a sequence $(f_k)\subset \Gamma_{C^{\infty}}(M,E)$, whose elements can be chosen compactly supported if $f$ has a compact ($\mu$-essential) support, such that 
\begin{itemize}
\item $\left\|f_k-f\right\|_p\to 0$ as $k\to \infty$,
\item $\left\|f_k\right\|_{\infty}\leq \left\|f \right\|_{\infty} \in [0,\infty]$ for all $k$,
\item for every smooth metric vector bundle $F \to M$ over $\mathbb{K}$, every $l\in \mathbb{N}_{\geq 1}$, and every smooth $\mathbb{K}$-linear partial differential operator $P$ from $M\to E$ to $M\to F$ of order $\leq l$ with $Pf \in \Gamma_{L^p}(M,F)$, one has $\left\|Pf_k-Pf\right\|_p\to 0$ as $k\to \infty$, if in case $l\geq 2$ one has $f \in \Gamma_{W^{l-1,p}_{\mathrm{loc}}}(M,E)$ (with no further assumption for $l=1$).
\end{itemize}
\end{theorem}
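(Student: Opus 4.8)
The plan is to run the classical Meyers--Serrin ``$H=W$'' argument---partition of unity plus Friedrichs mollification---but organized so that both the sup-norm constraint and the simultaneous behaviour under \emph{all} admissible operators $P$ are respected. First I would fix a locally finite cover $(U_i)$ of $M$ by relatively compact coordinate charts over each of which $E$ is trivialized by a \emph{smooth orthonormal} frame, together with a subordinate smooth partition of unity $(\chi_i)$ with $0\le\chi_i\le 1$, $\sum_i\chi_i=1$, $\supp\chi_i\Subset U_i$. Reading off the fibre components of a section in the chosen orthonormal frame and pushing forward by the chart turns $f|_{U_i}$ into an $\rr^r$- (resp.\ $\IC^r$-) valued $L^p$ function on an open subset of $\rr^m$, on which I let $T_i^{\e}$ act as componentwise convolution with a standard nonnegative mollifier of mass one and scale $\e$; write $J_\e$ for the same convolution acting on scalars. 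The approximants are the locally finite, hence smooth, sums
$$
f_k:=\sum_i \chi_i\, T_i^{\e_{i,k}} f ,
$$
with scales $\e_{i,k}\searrow 0$ to be fixed by a diagonal procedure; each summand is smooth and supported in $U_i$, so $f_k$ is compactly supported whenever $f$ is.

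The sup-norm bound is where the two design choices pay off. Since the frame is orthonormal, the fibre norm of a section equals the Euclidean norm of its component vector, so the vector-valued triangle inequality gives $|T_i^{\e}f(x)|\le (J_\e|f|)(x)\le\|f\|_\infty$ pointwise. Keeping the cut-offs \emph{outside} the mollifier then yields, by the triangle inequality in each fibre,
$$
|f_k(x)|\le \sum_i \chi_i(x)\,|T_i^{\e_{i,k}}f(x)|\le \|f\|_\infty\sum_i\chi_i(x)=\|f\|_\infty ,
$$
which is exactly $\|f_k\|_\infty\le\|f\|_\infty$. The plain $L^p$-convergence is the easy end: writing $f_k-f=\sum_i\chi_i(T_i^{\e_{i,k}}f-f)$ and using $T_i^{\e}f\to f$ in $L^p(\supp\chi_i)$, a choice of $\e_{i,k}$ small enough that each term has norm $\le 2^{-i}/k$ forces $\|f_k-f\|_p\to 0$.

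For the operator statement I would use Friedrichs' commutator lemma as the engine. On a chart, Leibniz gives $P(\chi_i g)=\tilde P_i g$ for a new operator $\tilde P_i=\sum_{|\delta|\le l}c_{i,\delta}\partial^\delta$ of order $\le l$ with smooth coefficients supported in $U_i$; thus $Pf_k-Pf=\sum_i \tilde P_i(T_i^{\e_{i,k}}f-f)$. I compare $\tilde P_i T_i^{\e}f$ with $T_i^{\e}\tilde P_i f$ through the commutators $[c_{i,\delta},T_i^{\e}]\partial^\delta f$. For $|\delta|\le l-1$ the hypothesis $f\in\Gamma_{W^{l-1,p}_{\loc}}(M,E)$ (vacuous when $l=1$) guarantees $\partial^\delta f\in L^p_{\loc}$, so these commutators tend to $0$ in $L^p_{\loc}$ trivially. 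For the top order $|\delta|=l$ I factor $\partial^\delta=\partial_j\partial^{\delta'}$ with $|\delta'|=l-1$, note $\partial^\delta T_i^\e f=\partial_j T_i^\e(\partial^{\delta'}f)$, and apply Friedrichs' lemma to $h:=\partial^{\delta'}f\in L^p_{\loc}$ and the coefficient $c_{i,\delta}$: $c_{i,\delta}\,\partial_j T_i^\e h-T_i^\e(c_{i,\delta}\partial_j h)\to 0$ in $L^p_{\loc}$. This is precisely the point where only $l-1$ derivatives of $f$ are needed, and it explains the $W^{l-1,p}_{\loc}$ assumption. Since $\tilde P_i f=P(\chi_i f)$ equals $\chi_i Pf$ plus an order $\le l-1$ operator applied to $f$, it lies in $L^p$, so $\tilde P_i T_i^\e f\to\tilde P_i f$ in $L^p$ for each fixed $i$.

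The hard part will be to make a \emph{single} sequence $(f_k)$ work \emph{simultaneously for every} admissible $P$ and in the \emph{global} norm $L^p(M)$, not merely in $L^p_{\loc}$. Because $(f_k)$ may not depend on $P$, I cannot choose $\e_{i,k}$ to kill a given $\tilde P_i$; instead the diagonal selection must control a $P$-independent countable family of local data---for instance $\|T_i^{\e_{i,k}}f-f\|_{W^{j,p}(\supp\chi_i)}\le 2^{-i}/k$ for every $j\le k$ with $f|_{U_i}\in W^{j,p}$---while the $P$-dependent top-order defect is kept in check by the \emph{uniform} Friedrichs bound $\|[c,T^\e]\partial_j h\|_p\le C\|\nabla c\|_\infty\|h\|_p$, valid for all $\e$. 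For a fixed $P$ I would then split $\sum_i\tilde P_i(T_i^{\e_{i,k}}f-f)$ into a finite head and an infinite tail: on the head the qualitative Friedrichs convergence above drives each term to $0$ as $k\to\infty$, while the tail must be absorbed using $Pf\in L^p$, the uniform commutator bound, and the bounded overlap of the cover. Closing this bookkeeping---reconciling the $P$-independence of $(f_k)$ with genuine $L^p(M)$-convergence for merely $W^{l-1,p}_{\loc}$ data---is the delicate step, and it is exactly what the construction underlying \cite{GGP} is engineered to deliver.
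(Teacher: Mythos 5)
Your proposal correctly reconstructs the local machinery (orthonormal frames plus componentwise mollification with the cut-offs kept outside the mollifier, which indeed yields the exact bound $\left\|f_k\right\|_\infty\leq\left\|f\right\|_\infty$; Friedrichs' commutator lemma applied to $h=\partial^{\delta'}f$ at top order, which correctly explains why only $f\in\Gamma_{W^{l-1,p}_{\loc}}(M,E)$ is needed), but it stops short of a proof at exactly the decisive point, and you say so yourself in the last paragraph. The gap is real and your fallback mechanism cannot close it. After the bounded-overlap reduction you need $\sum_i\|\tilde P_i(T_i^{\e_{i,k}}f-f)\|_p^p\to 0$ as $k\to\infty$, i.e.\ termwise convergence (which you have, qualitatively, for each fixed $i$) \emph{plus} a $k$-uniform majorant summable over $i$. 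But the uniform Friedrichs bound is of the form $C\,\|c_{i,\delta}\|_{C^1(K_i)}\,\|f\|_{W^{l-1,p}(K_i)}$, and neither factor is under control in $i$: the hypothesis gives $f$ only in $W^{l-1,p}_{\loc}$, so the local norms $\|f\|_{W^{l-1,p}(K_i)}$ need not be $\ell^p$-summable, while the coefficient norms of $\tilde P_i=P(\chi_i\,\cdot)$ grow without any bound as $i\to\infty$ since $P$ is an arbitrary smooth operator on a noncompact manifold (the same defect afflicts $\|\tilde P_i f\|_p\leq\|Pf\|_{L^p(\supp\chi_i)}+C_i(P)\|f\|_{W^{l-1,p}(K_i)}$, whose second summand destroys summability of the reconstitution term $T^{\e}(\tilde P_i f)-\tilde P_i f$ as well). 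Nor can the diagonal selection repair this: because the sequence $(f_k)$ must be fixed before $P$ is given, any preassigned decay $\sigma_{i,k}$ imposed on $\|T_i^{\e_{i,k}}f-f\|_{W^{j,p}(K_i)}$ is beaten by some smooth $P$ with $C_i(P)\sigma_{i,k}$ non-summable in $i$; so dominated convergence simply cannot be run on your decomposition, and the head/tail split leaves an infinite, uncontrolled tail $\{i: C_i(P)>k\}$ for every $k$. This is a missing idea, not bookkeeping: making one sequence serve \emph{all} operators $P$ simultaneously in the global $L^p(M)$-norm requires a structurally different organization of the approximation.

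For the comparison you should also know that the paper itself offers no argument here: Theorem \ref{ms} is stated with the one-line justification that it ``follows from the main result in \cite{GGP} and its proof,'' the point being that the approximating sequence constructed in \cite{GGP} depends only on $f$, the cover, and the mollifiers, so that inspecting that construction yields the $P$-independence asserted in the third bullet. Your sketch is thus an attempted reconstruction of the \cite{GGP} proof whose local ingredients are right, but the $P$-uniform global convergence---precisely the content the present paper imports wholesale from \cite{GGP}---remains unproved in your proposal, so as a blind proof it is incomplete.
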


\section{Some useful formulae}\label{appendix-c}

Let us first record that for all vector fields $X$, $Y$, $Z$ on $M$ one has
\[
X(Y,Z)= (\nabla^{TM}_XY,Z)+(Y,\nabla^{TM}_XZ),
\]
where in the LHS $X$ acts as a derivation on the smooth function $x\mapsto (X(x),Y(x))$ on $M$. This equation just means that the Levi-Civita connection is compactible with the Riemannian metric. Assume $\phi_1$ is a function on $M$. Recalling that $\nabla^{\dagger}$ is $(-1)$ times the divergence operator, one finds the product rule

\[
\nabla^{\dagger} (\phi_1 Y)= \phi_1 \nabla^{\dagger} Y- (\nabla \phi_1, Y).
\]
If $\phi_2$ is another function on $M$, then one has the product rule
\[
\nabla (\phi_1\phi_2)= \phi_1\nabla\phi_2+\phi_2\nabla\phi_1,
\]
and
\[
\Delta (\phi_1\phi_2) = \phi_1 \Delta \phi_2 + \phi_2 \Delta \phi_1 + 2 (\nabla \phi_1, \nabla \phi_2).
\]

For every function $f$ on $\mathbb{R}$ one has the chain rule
\[
\nabla f(\phi_1)=  f\rq{}( \phi_1 )\nabla \phi_1.
\]

If $X$ is compactly supported, then the divergence theorem holds
\[
\int \nabla^{\dagger}X d\mu=0,
\]
which holds by the definition of $\nabla^{\dagger}$:
$$
 \int \nabla^{\dagger}X d\mu = \int (\nabla^{\dagger}X ) \cdot 1 d\mu = \int (\nabla^{\dagger}X ) \cdot 1 d\mu= \int X \cdot (\nabla 1) d\mu =0.
$$

Finally, we record Bochner's equality: 
\begin{align*}
\left|\nabla^2 \phi_1\right|^2=-\frac{1}{2}\Delta |\nabla \phi_1|^2+(\nabla \phi_1,\nabla \Delta \phi_1) - \mathrm{Ric}(\nabla \phi_1,\nabla \phi_1).
\end{align*}
In particular, it follows that if $\mathrm{Ric}\geq -C$ for some constant $C\geq 0$ and $\phi_1$ is compactly supported, then in view of $\Delta=\nabla^{\dagger}\nabla$ one has
\begin{align}\label{bochner}
\int \left|\nabla^2 \phi_1\right|^2 d\mu &\leq -\int\frac{1}{2}(\nabla^{\dagger}\nabla |\nabla \phi_1|^2)\cdot 1 d\mu+ \int(\nabla \phi_1,\nabla \Delta \phi_1)d\mu+C \int (\nabla \phi_1,\nabla \phi_1) d\mu\\
& =   \int | \Delta \phi_1|^2d\mu+C \int (\nabla \phi_1,\nabla \phi_1) d\mu,
\end{align}
which is nothing but the Calder\'on-Zygmund inequality $CZ(2)$.

\subsection*{Acknowledgments} The authors are grateful to the anonymous referee for a careful reading of the manuscript and for valuable remarks. The second named author is partially supported by the Italian group INdAM-GNAMPA.


\begin{thebibliography}{9999} 

\bibitem [Am]{Am} E. Amar, \textit{On the $L^{r}$ Hodge theory in complete non compact Riemannian manifolds}. Math. Z., \textbf{287} (2017), 751--795.

\bibitem[BS] {alberto} D. Bianchi, A. Setti, \emph{Laplacian cut-offs, fast diffusions on manifolds and other applications.} Calc. Var. (to appear). \url{https://doi.org/10.1007/s00526-017-1267-9}

\bibitem[ABHN]{ABHN}  W. Arendt, C.J.K Batty, Charles J. K., M. Hieber, F. Neubrander, \emph{Vector-valued Laplace transforms and Cauchy problems.} Monographs in Mathematics, \textbf{96}. Birkh\"auser Verlag, Basel, 2001.

\bibitem[CTT]{thali} Chen, L.J. \& Thalmaier, A. \& Thompson, A.: \emph{Quantitative $C^1$ estimates by Bismut formula.} Preprint (2017).  \url{https://arxiv.org/pdf/1707.07121.pdf}

\bibitem[CD]{CD} T. Coulhon, X.T. Duong: \emph{ Riesz transform and related inequalities on noncompact Riemannian manifolds.} Comm. Pure Appl. Math. \textbf{56} (2003), no. 12, 1728--1751.

\bibitem[GGP] {GGP} D. Guidetti, B. G\"uneysu, D. Pallara: {\it $L^1$-elliptic regularity and $H=W$ on the whole $L^p$-scale on arbitrary manifolds,}  Annales Academiae Scientiarum Fennicae, Mathematica (2017) Volumen 42, 497--521.  

\bibitem [GP] {GP} B. G\"uneysu, S. Pigola, \textit{The Calder\'on-Zygmund inequality and Sobolev spaces on noncompact Riemannian manifolds}. Adv. Math. \textbf{281} (2015), 353--393.

\bibitem [Gue] {babba} B. G\"uneysu, \emph{ Sequences of Laplacian cut-off functions.} J. Geom. Anal. \textbf{26} (2016), 171--184. 

\bibitem [Gue2] {G2} B.  G\"uneysu, \emph{On generalized Schr\"odinger semigroups.} J. Funct. Anal. \textbf{262} (2012), 4639--4674.

\bibitem[Gue3] {gubo} B. G\"uneysu, \emph{Covariant Schr\"odinger semigroups on noncompact Riemannian manifolds}. \emph{Operator Theory: Advances and Applications}, 264, Birkh\"auser, 2017.

\bibitem [Gri] {gri} Grigor'yan, A.: \emph{Heat kernel and analysis on manifolds.}
AMS/IP Studies in Advanced Mathematics, 47. American Mathematical Society, Providence, RI; International Press, Boston, MA, 2009. 

\bibitem [GK]{GK} R. Grummt, M. Kolb, \emph{ Essential selfadjointness of singular magnetic Schr\"odinger operators on Riemannian manifolds.} J. Math. Anal. Appl. \textbf{388} (2012), 480--489.

\bibitem[LS]{LS} H. Leinfelder, C.G. Simader, \emph{Schr\"odinger Operators with Singular Magnetic Vector Potentials.} Math. Z. (1981)  Volume: 176, page 1--19.


\bibitem [Mi]{Mi} O. Milatovic, \emph{On $m$-accretive Schr\"odinger operators in $L^p$-spaces on manifolds of bounded geometry.} J. Math. Anal. Appl. \textbf{324} (2006),  762--772.

\bibitem [Mi2]{Mi2} O. Milatovic, \emph{On $m$-accretivity of perturbed Bochner Laplacian in $L^p$ spaces on Riemannian manifolds.} Integral Equations Operator Theory \textbf{68} (2010), 243--254.

\bibitem[PS]{PS} S. Pigola, A.G. Setti, \textit{Global divergence theorems in nonlinear PDEs and geometry}. Ensaios Matem\'aticos, \textbf{26}. Sociedade Brasileira de Matem\'atica, Rio de Janeiro, 2014.

\bibitem[RS] {RS} M. Reed, B. Simon,  \emph{ Methods of modern mathematical physics. II. Fourier analysis, self-adjointness.} Academic Press [Harcourt Brace Jovanovich, Publishers], New York-London, 1975. 

\bibitem [RV] {RV} M. Rimoldi, G. Veronelli, \textit{Extremals of Log Sobolev inequality on non-compact manifolds and Ricci soliton structures}. Preprint (2016). Available at \url{https://arxiv.org/pdf/1605.09240.pdf}.

\bibitem[Sh]{Shub} M.A. Shubin: \emph{Spectral Theory Of Elliptic Operators On Non-Compact Manifolds}.Asterisque (207), 1992, 35--108.

\bibitem [St]{St} R. Strichartz, \textit{Analysis of the Laplacian on the complete Riemannian manifold}. J. Funct. Anal. \textbf{52} (1983), 48--79.



\end{thebibliography}
\end{document}